\documentclass[a4paper,12pt]{amsart}
\usepackage{amsmath}
\usepackage{amssymb}
\usepackage{amscd}
\usepackage{mathrsfs}
\usepackage[all]{xy}
\usepackage{graphicx}
\usepackage[usenames]{color}
\usepackage{hyperref}
\hypersetup{
 colorlinks,
 citecolor=blue,
 filecolor=blue,
 linkcolor=blue,
 urlcolor=blue
}

\hyphenation{sub-mani-fold sub-mani-folds mani-fold mani-folds co-dimen-sion}
\def\mathcal{\mathscr}
\everymath{\displaystyle}
\setlength{\topmargin}{0in}
\setlength{\oddsidemargin}{0in}
\setlength{\evensidemargin}{0in}
\setlength{\textwidth}{6.3in}
\setlength{\textheight}{9.15in}
\setlength{\footskip}{0.25in}
\setlength{\parskip}{3mm}

\theoremstyle{plain}
\newtheorem{theorem}{\indent\sc Theorem}[section]
\newtheorem{lemma}[theorem]{\indent\sc Lemma}

\newtheorem{proposition}[theorem]{\indent\sc Proposition}

\theoremstyle{definition}

\newtheorem{remark}[theorem]{\indent\sc Remark}

\newcommand{\om}{\omega}
\newcommand{\p}{\partial}
\newcommand{\R}{\mathbb R}
\newcommand{\Core}{\mathrm{Core}}
\newcommand{\Int}{\mathrm{Int}\,}
\newcommand{\Op}{\mathcal{O}p\,}
\newcommand{\wt}{\widetilde}
\newcommand{\wh}{\widehat}

\newcommand{\Span}{\mathrm{Span}\,}
\newcommand{\ol}{\overline}

\begin{document}
\pagestyle{plain}
\thispagestyle{plain}

\title[Stabilized convex symplectic manifolds are Weinstein]{Stabilized convex symplectic manifolds are Weinstein}

\author{Yakov Eliashberg$^1$}
\address{$^1$Department of Mathematics\\Stanford University\\Stanford, CA 94305, USA}
\email{eliash@stanford.edu}

\author{Noboru Ogawa$^2$}
\address{$^2$Department of Mathematics\\Tokai University\\4-1-1 Kitakaname, Hiratsuka-shi, Kanagawa, 259-1292 Japan}
\email{nogawa@tsc.u-tokai.ac.jp}

\author{Toru Yoshiyasu$^3$}
\address{$^3$Center for Genomic Medicine, Graduate School of Medicine\\Kyoto University\\53 Shogoinkawahara-cho, Sakyo-ku, Kyoto-City, Kyoto, 606-8507 Japan}
\email{tyoshiyasu@genome.med.kyoto-u.ac.jp}

\begin{abstract}
We show that a stabilized   convex symplectic  (also called Liouville) manifold with the homotopy type of a half dimensional CW-complex is symplectomorphic to a flexible Weinstein manifold.
\end{abstract}

\maketitle

\section{Introduction}\label{intro}

\subsection{Convex symplectic manifolds}\label{sec:convex-sympl}

Recall that a primitive $\lambda$ of a symplectic form $\om$, $d\lambda=\om$, is called a {\em Liouville form}, and the vector field $Z$ which is $\om$-dual to $\lambda$, $\iota(Z)\om=\lambda$, is called a {\em Liouville vector field}.
The equation $\iota(Z)\om=\lambda$ is equivalent to the equation $L_Z\om=\om$, where $L_Z$ is the Lie derivative, i.e.~Liouville vector fields are {\em conformally symplectic}.

An open symplectic manifold $(V,\om)$ with an exact symplectic form $\om$ is called {\em symplectically convex} (see \cite{EG91}), or {\em Liouville} if there exists a Liouville form $\lambda$ such that the corresponding Liouville vector field $Z$ is complete and there exists an exhaustion $\bigcup\limits_{j=1}^\infty V_j$, $V_j\subset V_{j+1}$, by compact domains $V_j$ with smooth boundaries $\p V_j$ such that $Z$ is outward transverse to $\p V_j$.
The domains $V_j$ with this property are called {\em Liouville domains}.

Given a Liouville domain $(V_1,\lambda)$, the attractor $\Core(V_1,\lambda):=\bigcap\limits_{t>0}Z^{-t}(V_1)$ of the field $-Z$ is called the {\em core} of the Liouville domain.
For a convex symplectic manifold $V=\bigcup\limits_{j=1}^\infty V_j$ with a fixed Liouville form $\lambda$, we define its core as $\Core(V,\lambda):=\bigcup\limits_{j=1}^\infty\Core(V_j,\lambda)$.
Equivalently, we can define $\Core(V,\lambda)$ as 
\[
\Core(V,\lambda)=\bigcup_{K\subset V,\; {\textrm{compact}}}\bigcap\limits_{t>0}Z^{-t}(K),
\] 
and this definition shows independence of $\Core(V,\lambda)$ of the choice of exhausting Liouville domains.
Of course, the core does depend on the choice of the Liouville form $\lambda$.

An important class of convex symplectic manifolds is formed by convex symplectic manifolds of finite type, or as they are also called {\em convex symplectic manifolds with cylindrical ends}.
One says that $(V,\om)$ is a convex symplectic manifold of {\em finite type} if it admits a Liouville form $\lambda$ with a Liouville vector field $Z$ and a compact Liouville subdomain $V_1\subset V$, i.e.~a domain with boundary $\p V_1$ transverse to $Z$, such that each point of $V\setminus V_1$ belongs to a $Z$ trajectory originated from a point of $\p V_1$.
The manifold $(V,\om)$ can be identified with the {\em completion} of the Liouville domain $V_1$, i.e.~attaching to $V_1$ the cylindrical end $([0,\infty)\times\p V_1, d(e^s(\lambda|_{\p V_1})))$, where $s$ is the coordinate corresponding to the factor $[0,\infty)$.

Note that the definition of a convex symplectic manifold of finite type fits into the definition of a general convex symplectic manifold by taking the translates $V_n:=Z^{n-1}(V_1)$, $n\geq1,$ as the required exhausting sequence.
For a finite type $(V,\lambda)$ its core is compact:
$\Core(V,\lambda)=\Core(V_1,\lambda)$, and conversely finite type convex symplectic manifolds can be characterized among convex symplectic manifolds as those which have a compact core for some choice of the Liouville form.

Fixing a cylindrical end structure, a contact structure is induced on the ideal boundary $\p_\infty V\cong\p V_1$.
However, this contact boundary is not determined by the symplectomorphism type of $V$.
In fact, as it was shown by Sylvain Courte \cite{Co14}, even the diffeomorphism type of $\p_\infty V$ can depend on the choice of the cylindrical end structure on a given convex symplectic manifold of finite type.

\subsection{Weinstein manifolds}\label{sec:Weinstein}

We say that a convex symplectic manifold $V$ is of {\em Weinstein type}, or simply {\em Weinstein}, if the corresponding Liouville vector field $Z$ can be chosen to admit a Lyapunov function $\phi\colon V\to\R$ which is Morse, or generalized Morse (i.e.~possibly with death-birth singularities).

The Lyapunov condition means that $|d\phi(Z)|\geq c||Z||^2$ for a positive function $c>0$ and some choice of a Riemannian metric on $V$.
We note that the Lyapunov function $\phi$ can always be modified to be {\em exhausting} (i.e.~proper and bounded below) and constant on boundaries $\p V_j$ of domains $V_j$ implied by the definition of symplectic convexity.
The core of a Weinstein manifold $(V,\lambda)$ is stratified by $Z$-stable manifolds of zeroes of $Z$, which are isotropic, see \cite{EG91, CE12}.
Hence, the critical points of a Lyapunov Morse function for a Liouville field have index $\leq n= \frac12 \dim V$, and therefore any Weinstein manifold admits an exhausting Morse function with critical points of index $\leq n$, i.e.~it has {\em Morse type $\leq n$}, and in particular is homotopy equivalent to an $n$-dimensional CW-complex.

Not every convex symplectic manifold is Weinstein.
Indeed, it may have Morse type $>n$.
The first example of this type, a $4$-dimensional convex symplectic manifold of Morse type $3$, was constructed by Dusa McDuff in \cite{Mc91}.
More examples were constructed in \cite{Ge94, Mi95, MNW13}.

The product $(V,\om)\times(V',\om')$ of two symplectically convex manifolds is symplectically convex, and the product of two Weinstein manifolds is Weinstein.
If $(V',\om')=(\R^{2k},\om_{\rm st})$ then the product $(V,\om)\times(\R^{2k},\om_{\rm st})$ is called the {\em stabilization}, or $k$-{\em stabilization}, of $(V,\om)$.

\subsection{Main results}\label{sec:main}

We prove in this paper the following theorem.

\begin{theorem}\label{thm:stab-convex}
Let $V$ be a $(2n-2)$-dimensional convex symplectic manifold of Morse type $\leq n$.
Then its $1$-stabilization $X$ is Weinstein, and moreover if $n\geq3$ flexible Weinstein, {\rm see Section \ref{sec:flexibility} below for the definition and discussion of flexibility}.
In particular, the $1$-stabilization of McDuff's example in \cite{Mc91} is Weinstein.
\end{theorem}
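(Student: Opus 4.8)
\emph{Strategy and setup.} The goal is a flexible Weinstein structure on $X=V\times\R^2$ (merely Weinstein if $n\le 2$, with $n=1$ trivial); the construction will be uniform and will produce a flexible structure exactly when $n\ge 3$, since that is when loose Legendrians --- on which flexibility rests --- are available. We produce the structure by a Liouville homotopy, so that the identification with $X$ follows from the general fact that Liouville-homotopic Liouville structures have symplectomorphic completions (\cite{CE12}). First treat the finite-type case: write $V$ as the completion of a Liouville domain $(V_1,\lambda)$ and, using that $V$ has Morse type $\le n$ (handle trading in dimension $2n-2$; automatic if $n\le 2$), arrange $V_1$ to be smoothly a handlebody $B^{2n-2}\cup h_1\cup\cdots\cup h_N$ with all handles of index $\le n$. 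Then $X_1:=V_1\times D^2$ (product Liouville form, corners smoothed) is a Liouville domain with completion $X$ and core $\Core(V_1,\lambda)\times\{0\}$, and, since $D^2$ is a $2$-disk, it is smoothly the handlebody $B^{2n}\cup(h_1\times D^2)\cup\cdots\cup(h_N\times D^2)$ of index $\le n$ in dimension $2n$. The plan is to Liouville-homotope the product Liouville structure on $X_1$ into a Weinstein one all of whose top-index handles are loose; its completion is then the desired (flexible, if $n\ge 3$) Weinstein manifold symplectomorphic to $X$. The general convex case is recovered by running this along an exhaustion $V_1\subset V_2\subset\cdots$ and taking a direct limit of the resulting Weinstein structures.

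\emph{The inductive Liouville homotopy.} I would construct the homotopy by induction on the handles. Suppose the structure has been Liouville-homotoped so that $P_{i-1}:=B^{2n}\cup(h_1\times D^2)\cup\cdots\cup(h_{i-1}\times D^2)$ carries a Weinstein structure; we must realize the next smooth handle $h_i\times D^2$, of index $k_i\le n$, as a genuine Weinstein handle of index $k_i$, i.e.\ make its attaching sphere --- a framed embedded $S^{k_i-1}$ in the contact-type boundary $\p P_{i-1}$ --- isotropic. If $k_i<n$ this sphere has dimension $\le n-2$, below the isotropic dimension of a $(2n-1)$-dimensional contact manifold, so by Gromov's $h$-principle it is isotopic to an isotropic sphere and the handle attaches subcritically; if $k_i=n$ it is an $S^{n-1}$, of dimension exactly $n-1$, the Legendrian dimension of $\p P_{i-1}$, and since $n\ge 3$ the latter has dimension $\ge 5$, so by Murphy's $h$-principle it is isotopic, within its formal isotopy class, to a loose Legendrian, and the handle attaches as a flexible critical Weinstein handle (for $n=2$ one asks only for a Legendrian, not loose, representative). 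Performing each of these moves as a Liouville homotopy of the ambient structure, supported near $h_i\times D^2$, and iterating over $i=1,\dots,N$ produces the desired Weinstein structure on $X_1$.

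\emph{Role of stabilization, and the main obstacle.} The extra $\R^2$ enters only through a dimension count: inside $V$ itself an index-$n$ handle is attached along $S^{n-1}$ in a $(2n-3)$-dimensional contact boundary, \emph{above} its isotropic dimension $n-2$, so it cannot be made isotropic --- this is exactly the obstruction to $V$ being Weinstein that McDuff's example \cite{Mc91} realizes --- whereas after the $1$-stabilization the boundary has dimension $2n-1$ and $S^{n-1}$ is precisely Legendrian-dimensional, hence unobstructed and, via looseness, flexible. I expect the main difficulty to lie in the inductive step: one must run the isotropic and loose-Legendrian $h$-principles \emph{relatively and parametrically}, so that each attaching sphere is carried to an isotropic (resp.\ loose Legendrian) representative through a Liouville homotopy of the ambient structure rather than a mere abstract isotopy; one must check that no formal obstruction intervenes --- the almost complex structure being the product one coming from $V\times\R^2$ (which restricts correctly over each handle) and the formal Legendrian data along the index-$n$ spheres being freely prescribable once $n\ge 3$ --- and one must keep control of the intermediate contact boundaries $\p P_i$, which are non-standard since $V$ is not Weinstein, and of completeness of the Liouville field throughout. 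Establishing the opening reduction --- that $V$ admits a Liouville domain which is a handlebody of index $\le n$ --- is itself part of the technical work. The subcritical handles are comparatively routine; the index-$n$ handles, where Murphy's theorem, the flexible Weinstein handle calculus, and their relative forms must be combined, carry the real content, and this is where the hypothesis $n\ge 3$ is essential.
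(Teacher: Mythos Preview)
Your approach has a genuine gap at its core. You correctly observe that $X_1=V_1\times D^2$ is smoothly a handlebody of index $\le n$ in dimension $2n$, and that by the $h$-principles for isotropic and loose Legendrian embeddings one can build a flexible Weinstein structure on such a handlebody --- but that is exactly the existence statement Theorem~\ref{thm:flex}~(\ref{thm:flex_exist}), and it does not touch the given Liouville form on $X_1$. The missing step is the one you yourself flag as the ``main obstacle'' without resolving it: you must show that the Weinstein structure you build is Liouville-homotopic to the \emph{product} form $\mu_V+\tfrac12(x\,dy-y\,dx)$. Isotoping an attaching sphere in $\p P_{i-1}$ can indeed be realized by a Liouville homotopy supported in a collar, but after that the interior of the handle $h_i\times D^2$ still carries the product Liouville form, whose Liouville field is $Z_V+\tfrac12(x\p_x+y\p_y)$ and has no reason to possess a single nondegenerate zero; it is not the Weinstein handle model, and nothing in your outline homotopes it to that model rel the attaching region. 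Note that your inductive step uses only that $X_1$ admits a smooth handle decomposition of index $\le n$ and that attaching spheres can be made isotropic or loose; if it worked, it would show every Liouville domain of Morse type $\le n$ and dimension $\ge6$ is flexible Weinstein, with the stabilization playing no role beyond the dimension count.

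The paper proceeds entirely differently. It first applies Theorem~\ref{thm:flex}~(\ref{thm:flex_exist}) to obtain an abstract flexible Weinstein manifold $(W,\lambda)$ formally symplectomorphic to $(X,\mu)$, and then constructs an exact symplectomorphism $X\to W$ by a telescope (Mazur-trick) argument, interleaving two families of embeddings. Embeddings $f_i\colon X_i\hookrightarrow W$ come from Gromov's $h$-principle for positive-codimension symplectic embeddings (Theorem~\ref{thm:X-emb}) applied to the open $(2n-2)$-manifold $V\times\{0\}$, extended to a neighborhood and then to all of $X_i$ via the Liouville flows using that $\Core(X_i,\mu)\subset V\times\{0\}$. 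Embeddings $g_i\colon W_i\hookrightarrow X$ come from the $h$-principle for flexible Weinstein domains (Theorem~\ref{thm:W-emb}). The parametric versions of both theorems show the round-trip compositions are Hamiltonian isotopic to inclusions, and the Mazur trick assembles everything into a symplectomorphism. Thus the stabilization is used not as a handle-thickening device but to place the core inside a codimension-two symplectic submanifold, precisely the hypothesis Gromov's theorem requires; this is the mechanism your scheme lacks.
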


\begin{remark}
	\begin{enumerate}
	\item It was proven in \cite{EG91} that for any two tangentially homotopy equivalent convex symplectic manifolds their 2-stabilizations are symplectomorphic.
	Moreover, it was shown that for Weinstein manifolds $1$-stabilization is sufficient.
	This implies that a $2$-stabilization of a $(2n-2)$-dimensional convex symplectic manifold of Morse type $\leq n$ is Weinstein.
	The improvement in this paper became possible thanks to the development of the theory of flexible Weinstein manifolds, see \cite{CE12}.
	\item Even if $V$ is of finite type, we do not know whether the ideal contact boundary of its stabilization is isomorphic to   the ideal contact boundary of the corresponding flexible Weinstein manifold.
	\end{enumerate}
\end{remark} 

Theorem \ref{thm:stab-convex} is a corollary of a more general theorem which we formulate below.

Given a manifold $V$ and a closed subset $A\subset V$, we say that each point of $A$ {\em has an access to infinity} if every compact subset $B\subset A$ has an arbitrarily small open neighborhood $U\supset B$ such that each connected component $C$ of $V\setminus U$ is not compact.

For instance, if $V$ is a non-compact connected manifold and $A\subset V$ is a closed (as a subset) submanifold of $V$ of codimension $>1$ then each point of $A$ has an access to infinity.
For codimension $1$ connected submanifolds, the condition is violated only for compact submanifolds homological to $0$.

If $V$ is a symplectic manifold and $ A\subset V$ is a locally closed subset, we say that $A$ {\em admits a symplectic extension of positive codimension} if for each point $a\in A$ there exists a neighborhood $U_a\ni a$ in $V$ and a closed (as a subset) symplectic submanifold $\Sigma_a\subset U_a$ of positive codimension such that $U_a\cap A\subset \Sigma_a$, and each point of $U_a\cap A$ has an access to infinity in $\Sigma_a$.

\begin{theorem}\label{thm:general}
Let $(X,\om)$ be a $2n$-dimensional, $n\geq 3$, convex symplectic manifold of Morse type $\leq n$.
Suppose that for an appropriate choice of a Liouville form $\lambda$, its core $C:=\Core(X,\lambda)$ can be presented as a finite or countable union $C=\bigcup\limits_{i\geq1} C_i$ of disjoint sets $C_i$ which admits a symplectic extension of positive codimension and such that $\bigcup\limits_{i\leq j} C_i$ is compact for all $j\geq1$.
Then $X$ is symplectomorphic to a flexible Weinstein manifold.
\end{theorem}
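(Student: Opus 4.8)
The plan is to deform the given Liouville form $\lambda$, through a homotopy of complete Liouville forms on $X$, to a flexible Weinstein structure $\lambda'$; since a Liouville homotopy induces an exact symplectomorphism of the completions (see \cite{CE12}), this presents $X$ as symplectomorphic to the resulting flexible Weinstein manifold. The whole deformation is localized near the core $C$ and is carried out one piece $C_i$ at a time. Since $K_j:=\bigcup_{i\le j}C_i$ is compact, each $K_j$ lies in the interior of a term of an exhaustion of $X$ by Liouville domains, which we fix once and for all. The argument then reduces to the following local statement, applied successively for $i=1,2,\dots$: in any sufficiently small neighborhood $\Op C_i$ --- chosen inside $K_i$ and disjoint from the ones used for $C_1,\dots,C_{i-1}$ --- the form $\lambda$ can be deformed, rel a collar of $\p(\Op C_i)$ and through Liouville forms on $\Op C_i$, to one that is Weinstein there and whose index-$n$ handles are attached along loose Legendrian spheres. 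Each such modification is supported in a compact set, and the neighborhoods can be kept locally finite, so the infinite composition converges to a complete Liouville form $\lambda'$ on $X$ which is Weinstein and flexible near $C=\bigcup_i C_i$; since there is nothing to arrange off the core, $(X,\lambda')$ is a flexible Weinstein structure Liouville-homotopic to $\lambda$.

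The content lies in the local step near a fixed piece $C_i$, which by the above we may assume compact, together with its symplectic extension $\Sigma_i$: a symplectic submanifold of positive --- hence $\ge 2$ --- codimension $2k$, closed as a subset near $C_i$, containing $C_i$, in which each point of $C_i$ has an access to infinity. By the symplectic neighborhood theorem a neighborhood of $\Sigma_i$ in $X$ is symplectomorphic to a neighborhood of the zero section in the symplectic normal bundle $E\to\Sigma_i$; thus near $C_i$ the form $\om$ acquires a symplectic disk bundle model whose $2k$ fibre directions provide the room in which to stabilize. As $[\om]=0$ in $H^2(X;\R)$, the form $\om|_{\Sigma_i}$ is exact near $C_i$; combined with the access-to-infinity hypothesis, which furnishes a suitable outward Liouville vector field on a neighborhood of $C_i$ in $\Sigma_i$, this lets one realize such a neighborhood as a sublevel set in a Weinstein domain of dimension $2n-2k$ (the base being treated by induction on $\dim\Sigma_i$). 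One then deforms $\lambda$ on $\Op C_i$, rel the collar, onto a Weinstein structure built from this base: subcritical handles are attached fibrewise over it, and --- when $C_i$ carries an $n$-dimensional cell of the skeleton of $X$, in which case $C_i$ is non-isotropic, since an isotropic $n$-submanifold of $X^{2n}$ admits no symplectic extension of positive codimension --- an index-$n$ handle is attached whose isotropic core and Legendrian attaching sphere are built using the $2k$ fibre directions, so that the attaching sphere is a stabilized, hence loose, Legendrian \cite{CE12}. Interpolating in the collar, where the original Liouville flow has no recurrence, completes the step.

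The step I expect to be the main obstacle is exactly this local one, and within it the extraction from ``access to infinity in a symplectic extension of positive codimension'' of an isotropic skeleton whose $n$-handles are attached flexibly. It is here that both hypotheses on $\Sigma_i$ are genuinely consumed, and this is what lets one improve the two stabilizations needed in \cite{EG91} to a single one: the $2k\ge 2$ normal directions of $\Sigma_i$ supply the stabilization making the critical attaching spheres loose --- legitimately, since $\dim X\ge 6$ --- while the access to infinity provides the room to re-route the Liouville field onto the new skeleton.

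Theorem \ref{thm:stab-convex} follows by verifying the hypothesis of Theorem \ref{thm:general} for $X=V\times\R^2$. Choosing the Liouville form on $V$ so that $\Core(V)$ is a CW complex --- compact in the finite-type case, otherwise exhausted by finite subcomplexes --- and taking for $\lambda$ the product form, we get $\Core(X,\lambda)=\Core(V)\times\{0\}$; take $C_i$ to be its open $(i-1)$-cells, so that $\bigcup_{i\le j}C_i$ is the $(j-1)$-skeleton, hence compact. Each cell $e$ has dimension $\le n$: if $\dim e<n$ it is contained in a symplectic submanifold of $X$ of codimension $\ge 2$, while if $\dim e=n$ then $e$, lying in the $(2n-2)$-dimensional factor $V$, is automatically non-isotropic and is therefore contained in a symplectic submanifold of $X$ of codimension $2$. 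In either case each point of $e$ has an access to infinity in its extension, since $e$ has codimension $\ge 2$ there, or codimension $1$ but is non-compact. Thus Theorem \ref{thm:general} applies, in particular to the $1$-stabilization of McDuff's example from \cite{Mc91}.
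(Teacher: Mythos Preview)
Your strategy---deform $\lambda$ directly, piece by piece near the core, into a flexible Weinstein form---is genuinely different from the paper's, and the local step you isolate is exactly where it breaks down. The hypotheses do not give you what you claim to extract from them. The sets $C_i$ are arbitrary pieces of the core: they carry no cell structure, no stratification, no handle decomposition; there is no ``$n$-dimensional cell of the skeleton'' to speak of. The symplectic extension $\Sigma_a$ is only defined locally near each point $a\in C_i$, not globally over $C_i$, so there is no single $\Sigma_i$ on which to run an induction. Most seriously, ``access to infinity'' is a purely topological condition on how $C_i$ sits inside $\Sigma_a$; it does not produce an outward Liouville field on $\Sigma_a$, nor any reason to believe a neighborhood of $C_i$ in $\Sigma_a$ is a Weinstein sublevel set. (In McDuff's example the core is a closed hypersurface with no isotropic skeleton at all; your scheme would have to manufacture one from scratch, and nothing in the hypotheses lets you do that.) Your deduction of Theorem~\ref{thm:stab-convex} inherits this problem: you assume $\Core(V)$ is a CW complex, but $V$ is merely convex symplectic, not Weinstein, so its core need not admit any such structure.

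The paper avoids all of this by never trying to make $\lambda$ Weinstein. Instead it builds, via Theorem~\ref{thm:flex}(\ref{thm:flex_exist}), a separate flexible Weinstein manifold $W$ formally symplectomorphic to $X$, and then runs the Mazur telescope to produce an honest symplectomorphism $X\to W$. The hypotheses enter only to supply embeddings in one direction: since each $\Sigma_a$ has positive codimension, Gromov's $h$-principle (Theorem~\ref{thm:X-emb}) embeds it symplectically into $W$; the access-to-infinity condition is exactly what is needed for the \emph{relative} version of that $h$-principle, so that the local embeddings over the cover of $C_i$ can be patched. Liouville flows then inflate these to embeddings $X_i\hookrightarrow W$. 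The reverse embeddings $W_i\hookrightarrow X$ come from flexibility of $W$ via Theorem~\ref{thm:W-emb}. Thus the symplectic extension and access-to-infinity hypotheses are consumed by an embedding $h$-principle, not by any direct surgery on $\lambda$.
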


The core of any stabilized convex symplectic manifold clearly admits a symplectic extension of positive codimension, and hence Theorem~\ref{thm:stab-convex} is a special case of Theorem~\ref{thm:general}.

Here is another corollary of Theorem~\ref{thm:general}.
We say that a submanifold $A$ of a symplectic manifold $V$ is {\em nowhere coisotropic} if each tangent plane $T_{x}A\subset T_{x}V$ is not coisotropic, i.e.~$(T_{x}A)^{\perp_{\om}}\nsubseteq T_{x}A$.

\begin{theorem}\label{thm:stratified}
Let $X$ be a $2n$-dimensional, $n\geq 3$, convex symplectic manifold of Morse type $\leq n$.
Suppose that for an appropriate choice of a Liouville form $\lambda$, the core of $X$ admits a stratification $\Core(X,\lambda)=\bigcup_{i\geq1}S_i$ of codimension $\geq 3$ such that each stratum is nowhere coisotropic.
Then the convex symplectic manifold $X$ is Weinstein, and moreover flexible Weinstein.
\end{theorem}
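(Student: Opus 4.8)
The plan is to deduce Theorem \ref{thm:stratified} from Theorem \ref{thm:general}. For the given Liouville form $\lambda$, I will regroup the core $C:=\Core(X,\om)=\bigcup_{i\ge1}S_i$ into pairwise disjoint, locally closed pieces $C_j$ with $\bigcup_{j\le m}C_j$ compact for every $m$ and with each $C_j$ admitting a symplectic extension of positive codimension; then Theorem \ref{thm:general} gives the conclusion. Since the pieces $C_j$ will be chosen so that each of them lies inside a single stratum $S_i$, the whole argument reduces to one local statement about an individual nowhere coisotropic stratum.

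\textbf{Local lemma.} Let $S$ be a locally closed, nowhere coisotropic submanifold of codimension $c\ge3$ in $(X^{2n},\om)$ and let $a\in S$. Then there is a neighborhood $U_a$ of $a$ and a symplectic submanifold $\Sigma_a\subset U_a$, closed in $U_a$, of dimension $2n-2$, with $U_a\cap S\subset\Sigma_a$ and such that each point of $U_a\cap S$ has access to infinity in $\Sigma_a$. I would prove this in two steps. First, a piece of symplectic linear algebra: if $L\subset(\R^{2n},\om_{\rm st})$ is a subspace of codimension $c\ge3$ that is not coisotropic, then its null space $N:=L\cap L^{\perp_{\om}}$ is isotropic of dimension $\le c-2$; one splits $L=L_0\oplus N$ with $L_0$ symplectic and $\om$-orthogonal to $N$, chooses inside the symplectic subspace $L_0^{\perp_{\om}}$ an isotropic subspace $N'$ with $N\oplus N'$ symplectic, and then adjoins $\om$-orthogonal symplectic $2$-planes until the dimension is $2n-2$, obtaining a symplectic subspace $W\supset L$ with $\dim W=2n-2$. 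Second, one applies this fiberwise over a small (hence contractible) disk neighborhood $D$ of $a$ in $S$ to get a smooth field $p\mapsto W_p$ of such subspaces along $D$ --- a section of a locally trivial bundle with nonempty fibers over a contractible base --- fixes a Riemannian metric $g$ and sets $\Sigma:=\exp^{g}\big(\{(p,v):p\in D,\ v\in W_p\cap(T_pS)^{\perp_g},\ |v|_g<\varepsilon\}\big)$ for small $\varepsilon$. This $\Sigma$ is a $(2n-2)$-submanifold containing $D$ with $T_p\Sigma=W_p$ along $D$, so $\om|_\Sigma$ is nondegenerate along $D$ and, by openness, on a neighborhood; a small ball $U_a$ about $a$ and $\Sigma_a:=\Sigma\cap U_a$ then work. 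For access to infinity, note that for $U_a$ a small ball $\Sigma_a$ is an open ball and $U_a\cap S$ is a connected, non-compact submanifold of it of codimension $c-2\ge1$, closed as a subset; if $c\ge4$ this is codimension $\ge2$, and if $c=3$ it is a non-compact connected hypersurface, so in either case access to infinity follows from the remarks after the definition in Section \ref{sec:main}.

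\textbf{The regrouping.} Let $V_1\subset V_2\subset\cdots$ be exhausting Liouville domains for $(X,\lambda)$ and $K_\ell:=\Core(V_\ell,\lambda)$, a nested sequence of compact sets with $\bigcup_\ell K_\ell=C$. Since each $S_i$ is locally closed and $K_\ell\subset C$, each $S_i\cap K_\ell$ is locally closed in $X$ and compact, and $C=\bigcup_{i,\ell}(S_i\cap K_\ell)$. Enumerate the pairs $(i,\ell)$ as $j\mapsto(i_j,\ell_j)$ by a well-order of type $\om$ and set $C_j:=(S_{i_j}\cap K_{\ell_j})\setminus\bigcup_{j'<j}(S_{i_{j'}}\cap K_{\ell_{j'}})$. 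These are pairwise disjoint with $\bigsqcup_jC_j=C$; each $C_j$ is locally closed, being a locally closed set minus a finite union of compact sets; $\bigcup_{j\le m}C_j$ is a finite union of compact sets, hence compact; and $C_j\subset S_{i_j}$, so near any $a\in C_j$ we have $U_a\cap C_j\subset U_a\cap S_{i_j}$. Applying the local lemma to the stratum $S_{i_j}$ produces a symplectic $\Sigma_a$ of positive codimension with $U_a\cap C_j\subset U_a\cap S_{i_j}\subset\Sigma_a$, and $U_a\cap C_j$ inherits access to infinity in $\Sigma_a$ from $U_a\cap S_{i_j}$, since access to infinity passes to subsets. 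Thus all hypotheses of Theorem \ref{thm:general} hold, so $X$ is symplectomorphic to a flexible Weinstein manifold.

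I expect the main obstacle to lie in the local lemma --- specifically in upgrading the pointwise linear-algebra splitting to an honest symplectic submanifold that contains an entire neighborhood of $a$ in $S$ (not merely one tangent to $W_a$ at $a$) while keeping its dimension equal to $2n-2$; the codimension-$3$ case, where $S$ is forced to sit in $\Sigma_a$ as a hypersurface rather than with codimension $\ge2$, is the delicate point and is exactly where the non-compactness argument for access to infinity is needed. The regrouping, by contrast, is routine once one crosses the strata with a compact exhaustion and diagonalizes, ensuring that every piece $C_j$ lies inside a single stratum.
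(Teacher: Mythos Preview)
Your local lemma is the heart of the matter and matches the paper's argument: the paper also reduces Theorem~\ref{thm:stratified} to showing that each stratum $S_i$ admits a symplectic extension of positive codimension, via a linear-algebra step (the paper's version is the one-liner: pick $v\in (T_aS_i)^{\perp_\om}\setminus T_aS_i$, which exists by non-coisotropy, and take any hyperplane in $v^{\perp_\om}$ through $T_aS_i$ transverse to $\R v$) followed by thickening a complementary plane field along a small disk in $S_i$ --- exactly your $\exp^g$ construction in different words. Your treatment of access to infinity is in fact more explicit than the paper's, which leaves that verification to the reader.

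The regrouping step, however, is both unnecessary and contains a slip. It is unnecessary because the paper's (weak) definition of stratification already stipulates that $\bigcup_{i\le j}S_i$ is compact for every $j$; so once the local lemma is established one may simply take $C_i:=S_i\setminus\bigcup_{j<i}S_j$ (to enforce disjointness) and feed these directly into Theorem~\ref{thm:general}, with no need to cross against the exhaustion cores $K_\ell$. The slip is your assertion that each $S_i\cap K_\ell$ is compact: $K_\ell$ is compact but $S_i$ is only locally closed, so the intersection need not be closed (an open arc meeting a closed interval at its endpoints is the model). Hence your claim that $\bigcup_{j\le m}C_j$ is ``a finite union of compact sets'' does not follow as written. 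Replacing the whole regrouping by the simpler $C_i$ above repairs this immediately, and the resulting proof coincides with the paper's.
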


We use above the term stratification in a weak sense.
A {\em stratified} closed subset $A\subset V$ is a closed set presented as a finite or countable union of locally closed submanifolds $A_i$, called {\em strata}, $A:=\bigcup\limits_{i\geq1} A_i$ such that all unions $\bigcup\limits_{i\leq j}A_i$ are compact.

\begin{proof}[Proof of Theorem \ref{thm:stratified}]
It is sufficient to prove that each stratum $S_i$ admits a symplectic extension of positive codimension.
Note that any non-coisotropic subspace $A$ of codimension $\geq 3$ in a symplectic vector space $(B,\om)$ is contained in a symplectic subspace $C\subset B$ such that $\dim A<\dim C<\dim B$.
Indeed, take a vector $v\in A^{\perp_\om}\setminus A$ and consider its $\om$-orthogonal complement subspace $v^{\perp_\om}\subset B$.
Note that $A\subset v^{\perp_\om}$.
Then any codimension $1$ subspace $C\subset v^{\perp_\om}$ which is transverse to $v$ and contains $A$ is a codimension $2$ symplectic subspace of $B$.
Given $x\in S_i$, we therefore can find a $(2n-2)$-dimensional symplectic subspace $C_x$ so that $T_{x}S_i\subset C_x\subset T_{x}X$.
Let us choose complementary subspace $ \theta_x\subset C_x$ such that $ \theta_x\oplus T_{x}S_i=C_x$ and extend it continuously to a field $\theta$ of planes transverse to $S_i$ on a neighborhood $U_x$ of $x$ in $S_i$.
If the neighborhood $U_x$ is small enough then the space $ C_y:=\Span(\theta_y,T_{y}S_i)$ is symplectic for each $y\in U_x$, and so is the codimension $2$ symplectic hypersurface $\Sigma$ containing $U_x$ and tangent to the plane field $\theta$.
\end{proof}

\subsection*{Plan of the paper}

In Sections~\ref{sec:flexibility} and \ref{sec:opensympemb}, we review Weinstein flexibility~\cite{CE12, EM13, ELM20} and Gromov's $h$-principle for exact symplectic embeddings of open symplectic manifolds~\cite{Gr86, EM02}, respectively.
Section~\ref{sec:homotopy-symplectomorphism} is a remark on the existence of a Liouville homotopy and a symplectomorphism.
With the help of the above tools we first prove Theorem~\ref{thm:stab-convex} in Section~\ref{sec:proof1} to make main ideas more transparent, and then prove Theorem~\ref{thm:general} in Section~\ref{sec:proof2}.

\subsection*{Acknowledgement}

The collaboration on this paper started during the Kenji Fukaya's 60th birthday conference ``Geometry and Everything'' at the Research Institute for Mathematical Sciences (RIMS), Kyoto University, and continued during the visit of the first author to RIMS.
The authors thank the Institute for its hospitality.
The first author is partially supported by the NSF grant DMS--1807270.
The second author is partially supported by JSPS KAKENHI Grant Number JP17K05283.

\section{Recollection of Weinstein flexibility}\label{sec:flexibility}

\subsection{Loose Legendrian knots}

We recall that an $(n-1)$-dimensional submanifold $\Lambda$ of a $(2n-1)$-dimensional contact manifold $(M,\xi)$ is called {\em Legendrian} if it is tangent to $\xi$.

The contact plane field $\xi$ carries a canonical conformal symplectic structure and tangent planes to a Legendrian are Lagrangian subspaces of $\xi$ for that conformal structure.
A {formal Legendrian submanifold} $\Lambda\to M$ is an $(n-1)$-dimensional smooth submanifold together with a homotopy of its tangent planes to a field of Lagrangian subspaces of $\xi$.
 
In \cite{Mu12}, Emmy Murphy introduced a class of the so-called {\em loose} Legendrian submanifolds in contact manifolds of dimension $\geq 5$ for which a formal Legendrian isotopy between Legendrian embeddings yields a genuine Legendrian isotopy.
We define this class below.
 
We begin with an operation of {\em stabilization} of a Legendrian submanifold which was first introduced in \cite{El90}, see also \cite{Mu12, CE12}.

In $\R^{2n-1}$, $n\geq3$, with the standard contact form $\alpha=dz-\sum\limits_{ i=1}^{n-1} y_i\,dx_i$ consider a Legendrian submanifold $\Lambda_0$ with the front $F_0=\{z^2=x_1^3\}$, i.e.~$\Lambda_0=\{z^2=x_1^3, 4y_1^2=9x_1, y_2=\cdots=y_{n-1}=0\}$.
Let $\R^{n-1}=\{z=0, y=0\}$ be the $x$-coordinate subspace and $\R_+^{n-1}=\{x_1>0\}\cap\R^{n-1}$.
Choose open domains $U$ and $U'\Subset U$ with smooth boundaries and let $\theta\colon\R_+^{n-1}\to\R$ be a function supported in $U$ such that $U'=\{x\in\R_+^{n-1}\mid\theta(x)>2x_1^{\frac32}\}$.
Let $\Lambda_U\subset\R^{2n-1}$ be a Legendrian submanifold whose front is obtained from $F_0$ by replacing the branch $z=-x_1^{\frac32}$ by the graph $z=-x_1^{\frac32}+\theta(x)$.
The Legendrian submanifold $\Lambda_U$ is called the {\em $U$-stabilization} of $\Lambda_0$.
Given any Legendrian submanifold $\Lambda$ in a contact manifold $(M,\xi)$, one can find Darboux coordinates in an arbitrarily small neighborhood $U$ of a point $a\in\Lambda$ such that the pair $(U,\Lambda\cap U)$ is contactomorphic to $(\R^{2n-1},\Lambda_0)$.
Hence the $U$-stabilization operation can be performed on $\Lambda$ in a neighborhood $U\ni a$.
We will keep the notation $\Lambda_U$ for the stabilized Legendrian.
As it was shown in \cite{El90}, the Legendrians $\Lambda$ and $\Lambda_U$ are always smoothly isotopic, and if $\chi(U)=0$ then they are {\em formally Legendrian isotopic}.

A connected Legendrian submanifold $\Lambda\subset M$ is called {\em loose} if it can be {\em destabilized}, i.e.~it is Legendrian isotopic to a stabilization of another Legendrian knot.
A possibly disconnected Legendrian is called loose if each its component is loose in the complement of the others.

\begin{theorem}[Murphy~\cite{Mu12}]\label{thm:Murphy}
For any contact manifold of dimension $\geq 5$, the inclusion of the space of loose Legendrian embeddings into the space of formal Legendrian embeddings is a homotopy equivalence.
\end{theorem}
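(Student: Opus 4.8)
The plan is to deduce the statement from a parametric, relative $h$-principle, proved by combining a wrinkling construction with the flexibility supplied by a single loose chart; the dimension hypothesis $\dim M=2n-1\ge 5$, i.e.~$n\ge 3$, is used exactly where this flexibility is invoked, and is indispensable, since in dimension $3$ Legendrian knots carry genuine non-formal invariants. Looseness is detected by a fixed local model --- the loose chart --- and persists under Legendrian isotopy, so after localizing to Darboux balls it suffices to establish a $C^0$-dense parametric relative $h$-principle: every family of formal Legendrian embeddings parametrized by $(D^k,\partial D^k)$ which is genuine and loose over $\partial D^k$ can be deformed, through formal Legendrians, rel $\partial D^k$, and $C^0$-close to the original, into a family of genuine loose Legendrian embeddings. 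The standard argument then yields a weak homotopy equivalence of the two embedding spaces, and metrizability of these mapping spaces upgrades it to a genuine homotopy equivalence, as claimed.

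The first main step is the $h$-principle for \emph{wrinkled Legendrian embeddings}, in the spirit of Eliashberg--Mishachev's wrinkling technique (see \cite{EM02}, and \cite{Mu12}): any family of formal Legendrian embeddings is $C^0$-close, through formal Legendrians and rel $\partial D^k$, to a family of genuine maps whose front projections carry only the standard cusp-edge singularities together with a controlled collection of ``wrinkles'' (modeled on unfurled swallowtails arrayed along embedded spheres). This is a holonomic-approximation argument carried out in the front projection; it uses no looseness and reduces the problem to resolving the wrinkles.

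The heart of the matter --- and the step I expect to be the main obstacle --- is to resolve the wrinkles while remaining embedded and while arriving at a Legendrian that is loose as a whole. Smoothing a single wrinkle produces a genuine Legendrian but inserts a small stabilization zig-zag, i.e.~a chart contactomorphic to a piece $\Lambda_U$ with $\chi(U)=0$ of the kind recalled above; each such insertion is formally trivial but alters the Legendrian isotopy type unless it is destabilized, and a naive count would demand a separate destabilization domain near every wrinkle. The device is to exploit the hypothesis that $\Lambda$ contains \emph{one} loose chart: by a preliminary $C^0$-small contact isotopy one enlarges and routes this chart so that it accompanies the support of every wrinkle, and then resolves all wrinkles into zig-zags that are absorbed by, i.e.~cancelled against, the ambient loose chart. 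The output is a genuine Legendrian embedding, still carrying a loose chart, $C^0$-close to the wrinkled model and formally Legendrian homotopic to the original; the construction is performed over $D^k$ rel $\partial D^k$, and here the componentwise definition of looseness (each component loose in the complement of the others) is what makes the family version of the absorption step go through.

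Chaining the two steps gives the required $C^0$-dense parametric relative $h$-principle for loose Legendrian embeddings, hence the theorem. The inputs beyond general $h$-principle machinery are the explicit front model for stabilization recalled above, the wrinkling $h$-principle for Legendrians, and the cancellation of a zig-zag against a loose chart; the subtlety of the whole argument is concentrated in keeping embeddedness and looseness under control, in families, throughout the wrinkle-resolution step.
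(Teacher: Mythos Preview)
The paper does not prove this theorem; it is quoted from Murphy~\cite{Mu12} as an external input and used as a black box in Section~\ref{sec:flexibility}. There is therefore no ``paper's own proof'' to compare your proposal against.

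That said, your sketch is a faithful outline of Murphy's original argument: the reduction to a $C^0$-dense parametric relative $h$-principle, the wrinkled-Legendrian approximation in the front, and the resolution of wrinkles by absorbing the resulting zig-zags into a routed loose chart are exactly the ingredients of~\cite{Mu12}. If your intent was to supply a proof where the paper gives none, you are on the right track; just be aware that the delicate content---keeping embeddedness in families during wrinkle resolution and the precise ``twist-marking'' or routing of the loose chart---is where the real work lies, and your outline treats it at the level of a plan rather than a proof.
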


\subsection{Flexible Weinstein manifolds}

The notion of {\em Weinstein flexibility}, introduced in \cite{CE12}, is based on the theory of loose Legendrians.

Let $(V,\lambda,\phi)$ be a $2n$-dimensional Weinstein manifold.
Consider its partition into {\em elementary cobordisms}:
$V=W_1\cup\cdots\cup W_m\cup\cdots$, where $W_i=\phi^{-1}([c_{i-1},c_i])$ for regular values $c_i$ of $\phi$ separating the critical values $a_i$ of $\phi$, i.e.~$c_0<a_0<c_1<a_1<c_2<\cdots$.
Each cobordism $W_i$ deformation retracts onto the union of the stable disks (with respect to the Liouville vector field $Z$) of its critical points and the stable disk of an index $k$ critical point of value $a_i$ intersects the level set $M_i=\phi^{-1}(c_i)$ in the $(k-1)$-dimensional isotropic {\em attaching sphere} for the contact structure $\{\lambda|_{M_i}=0\}$.

The Weinstein structure $(V,\lambda,\phi)$ is called {\em flexible} if for each cobordism $W_i$ the attaching Legendrian spheres of critical points of index $n$ on the level $a_i$ form a loose Legendrian link in the contact level set $M_i$.
In particular, subcritical Weinstein manifolds (i.e.~those for which $\phi$ has no critical points of index $n$) are flexible.

The following $h$-principle type result clarifies the term ``flexible".

\begin{theorem}[Cieliebak--Eliashberg~\cite{CE12}]\label{thm:flex}
For Weinstein structures on a fixed manifold or domain $V$ of dimension $2n\geq 6$, the following statements hold.
	\begin{enumerate}
	\item (Existence)
	Given a non-degenerate $2$-form $\eta$ and an exhausting Morse function $\phi\colon V\to\R$ without critical points of index $>n$, there exists a flexible Weinstein structure $(\lambda,\phi)$ (with the same function $\phi$) such that $\eta$ and $d\lambda$ are homotopic as non-degenerate $2$-forms.\label{thm:flex_exist}
	\item (Homotopy)
	Two flexible Weinstein structures $(\lambda_0,\phi_0)$ and $(\lambda_1,\phi_1)$ are Weinstein homotopic if and only if $d\lambda_0$ and $d\lambda_1$ are homotopic as non-degenerate $2$-forms.
	\item (Morse--Smale theory for Lyapunov functions)
	Given a flexible Weinstein structure $(\lambda,\phi)$ and any Morse function $\psi\colon V\to\R$ without critical points of index $>n$, there exists a Weinstein homotopy $(\lambda_t,\phi_t)$ with $(\lambda_0,\phi_0)=(\lambda,\phi)$ and $\phi_1=\psi$.
	\end{enumerate}
\end{theorem}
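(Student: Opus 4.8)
The plan is to reduce each of the three statements to the \emph{formal} (homotopy-theoretic) data of a Weinstein structure, using the decomposition into elementary cobordisms $V=W_1\cup\cdots\cup W_m\cup\cdots$ described above together with Murphy's h-principle (Theorem~\ref{thm:Murphy}) as the essential source of flexibility. The formal data of a Weinstein structure $(\lambda,\phi)$ consists of the underlying Morse function $\phi$, with all critical points of index $\leq n$, together with the homotopy class of $d\lambda$ among non-degenerate $2$-forms. The content of the theorem is that for \emph{flexible} structures this formal data is a complete invariant, and that every admissible formal datum is realized; all three parts are instances of this principle.

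First, for (Existence), I would build the Weinstein structure inductively over the cobordisms $W_i=\phi^{-1}([c_{i-1},c_i])$ determined by $\phi$. On each cobordism one attaches Weinstein handles along isotropic attaching spheres in the contact level $M_{i-1}=\phi^{-1}(c_{i-1})$, an index-$k$ critical point contributing a $(k-1)$-sphere. For $k<n$ the attaching sphere is subcritically isotropic, and the classical h-principle for isotropic embeddings realizes any formal attaching data. For $k=n$ the attaching sphere is Legendrian, and I would arrange it to be \emph{loose}: looseness is always achievable by a stabilization (Section~\ref{sec:flexibility}), which alters the Legendrian only within its formal isotopy class, and Murphy's theorem then guarantees that the formal attaching data determines the genuine one up to Legendrian isotopy. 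The remaining freedom in the conformal symplectic data carried by the handles is exactly enough to match the prescribed homotopy class of $\eta$, so that $d\lambda$ and $\eta$ become homotopic as non-degenerate $2$-forms; the resulting structure is flexible by construction.

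Next, for (Homotopy), given flexible structures $(\lambda_0,\phi_0)$ and $(\lambda_1,\phi_1)$ with $d\lambda_0$ and $d\lambda_1$ homotopic through non-degenerate $2$-forms, I would first connect $\phi_0$ to $\phi_1$ through a generic path of functions via Cerf theory, keeping all indices $\leq n$ by trading any would-be index-$(n+1)$ critical point through a birth–death cancellation. Each elementary move along this path—handle slide, birth, death, or isotopy of an attaching sphere—would then be lifted to the Weinstein level. The crucial point is that at the index-$n$ stratum the attaching Legendrians stay loose, so the parametric form of Murphy's theorem upgrades the formal Legendrian isotopy dictated by the $2$-form homotopy to a genuine one, producing the desired Weinstein homotopy. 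Statement (Morse--Smale theory for Lyapunov functions) is the special case in which the target $\psi$ is prescribed in advance: I would deform $(\lambda,\phi)$ so that $\phi$ follows such a path ending at $\psi$, again invoking flexibility to realize every intermediate handle configuration.

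The main obstacle is the passage from the mere existence of formal data to genuine Weinstein homotopies, that is, proving that the formal data is a \emph{complete} invariant. This demands a parametric handle calculus for Weinstein structures—controlling births, deaths, and handle slides in one-parameter families—meshed with the parametric version of Murphy's h-principle for loose Legendrians. Ensuring that looseness is preserved at every stage of the Cerf-theoretic manipulation, and that the homotopy class of the non-degenerate $2$-form is tracked coherently across all handle moves, is the technically demanding heart of the argument; by comparison the subcritical attachments and the purely formal bookkeeping are soft.
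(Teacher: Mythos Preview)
The paper does not prove this theorem at all: it is stated in Section~\ref{sec:flexibility}, \emph{Recollection of Weinstein flexibility}, as a result quoted from \cite{CE12}, with no accompanying argument. There is therefore nothing in the paper to compare your proposal against. Your sketch is a reasonable high-level outline of the strategy actually carried out in \cite{CE12}---inductive handle attachment governed by the $h$-principle for isotropic embeddings in the subcritical range and by Murphy's theorem in the critical range, combined with a Cerf-theoretic analysis of one-parameter families of Lyapunov functions---but the details (the Weinstein handle calculus, preservation of looseness under births/deaths and slides, and the two-index theorem needed to keep indices $\leq n$ along the Cerf path) occupy a substantial portion of that book and are not reproduced here.
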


The definition of flexibility naturally extends to {\em Weinstein cobordisms}.

It is important to point out that the flexibility property is not invariant under Weinstein homotopy, see \cite{MS18}.
When calling a symplectic manifold flexible Weinstein, we always mean the existence of a flexible Weinstein structure for the given symplectic form.

\subsection{Symplectic embeddings of flexible Weinstein manifolds}\label{subsec:W-emb}

For two symplectic manifolds $(W,\om)$ and $(X,\eta)$, a {\em formal symplectic embedding} of $W$ into $X$ is a smooth embedding $f\colon W\to X$ together with a homotopy $\Phi_t\colon TW\to f^{\ast}TX$, $t\in[0,1],$ of injective bundle homomorphisms such that $\Phi_0=df$ and $\Phi_{1}^{\ast}\eta=\om$.
Any genuine symplectic embedding $f\colon X\to W$ can be considered formal by setting $\Phi_t\equiv df$.

A symplectic embedding $f\colon(W,d\lambda)\to(X,d\mu)$ between two exact symplectic manifolds with fixed Liouville forms is called {\em exact} if $f^{\ast}\mu=\lambda+dH$ for some function $H$ on $W$.
Note that if $W$ is compact then given an exact symplectic isotopy $f_t\colon(W,d\lambda)\to(X,d\mu)$ there exists an ambient Hamiltonian isotopy $F_t\colon X\to X$ such that $F_t|_{f_0(W)}=f_t$, $t\in[0,1]$.
Hence, we will refer in this paper to an exact symplectic isotopy as a {\em Hamiltonian} isotopy.

\begin{theorem}[Eliashberg--Murphy~\cite{EM13}, Eliashberg--Lazarev--Murphy~\cite{ELM20}]\label{thm:W-emb}
Let $(W,\lambda,\phi)$ be a $2n$-dimensional Weinstein domain and $W_0$ its Weinstein subdomain.
Suppose that the Weinstein cobordism $(W\setminus\Int W_0,\lambda,\phi)$ is flexible.
Let $(X,d\mu)$ be a convex symplectic manifold of the same dimension $2n$ such that the Liouville vector field $Z$ dual to $\mu$ is forward complete.
Then
	\begin{enumerate}
	\item Any formal symplectic embedding $f\colon(W,d\lambda)\to(X,d\mu)$ which is a genuine exact symplectic embedding on $W_0$ is formally isotopic rel.~$W_0$ to a genuine exact symplectic embedding.\label{thm:W-emb_non-para}
	\item Any two exact symplectic embeddings $f_0,f_1\colon(W,d\lambda)\to(X,d\mu)$ which coincide on $W_0$ and are formally isotopic rel.~$W_0$ can be connected by a Hamiltonian isotopy $f_t\colon(W,d\lambda)\to(X,d\mu)$, $t\in[0,1]$, fixed on $W_0$.\label{thm:W-emb_1-para}
	\end{enumerate}
\end{theorem}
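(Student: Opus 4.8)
The plan is to construct the genuine embedding of $W$ by induction over a Weinstein handle decomposition of the cobordism $W\setminus\Int W_0$, starting from the given genuine exact embedding on $W_0$ and extending one elementary cobordism at a time. Following Section~\ref{sec:flexibility}, I would write $W\setminus\Int W_0=W_1\cup\cdots\cup W_m$ as a union of elementary cobordisms, where each $W_i$ is obtained from the contact boundary of the previous stage by attaching Weinstein handles of index $k\le n$ along isotropic attaching spheres. The inductive invariant is that at each stage one has a genuine exact symplectic embedding $g$ of a Weinstein subdomain $W'\supset W_0$ into $X$, formally isotopic rel.~$W_0$ to $f|_{W'}$; when $W'=W$ we are done with part~(\ref{thm:W-emb_non-para}). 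Forward completeness of $Z$ is used here to guarantee a genuine Liouville collar beyond the contact hypersurface $g(\p W')\subset X$, providing the room into which the next handle and a neighborhood of its core can be placed.

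The inductive step extends $g$ across a single handle of index $k\le n$. The formal data $\Phi_t$ restrict, along the isotropic attaching sphere $\Lambda\subset\p W'$, to a formal isotropic embedding of $\Lambda$ into the contact hypersurface $g(\p W')$, and the task is to realize this by a genuine isotropic embedding compatible with the formal homotopy. For a subcritical handle, $k<n$, the sphere $\Lambda$ is isotropic of dimension $k-1<n-1$, and Gromov's $h$-principle for subcritical isotropic embeddings lets one $C^0$-approximate the formal isotropic embedding by a genuine one through the prescribed formal homotopy, no looseness being needed. For a critical handle, $k=n$, the sphere $\Lambda$ is Legendrian, and here the flexibility hypothesis is exactly that these attaching spheres form a loose link in the relevant contact level; Murphy's Theorem~\ref{thm:Murphy} then upgrades the formal Legendrian isotopy carried by $\Phi_t$ to a genuine Legendrian isotopy, positioning $\Lambda$ as required. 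Once $\Lambda$ sits as a genuine isotropic sphere agreeing with the formal data, the Weinstein neighborhood theorem identifies a neighborhood of the handle's stable disk with the standard Weinstein handle model, which produces the extension of $g$ over the handle as a genuine exact symplectic embedding; exactness survives because the model prescribes the primitive $\lambda$ up to an exact term, so the relation $g^*\mu=\lambda+dH$ is preserved. Iterating over all handles of all $W_i$ establishes part~(\ref{thm:W-emb_non-para}).

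For part~(\ref{thm:W-emb_1-para}) I would run the same scheme parametrically in the isotopy parameter. Given two genuine exact embeddings $f_0,f_1$ agreeing on $W_0$ together with a formal isotopy rel.~$W_0$ between them, one applies the parametric forms of the subcritical isotropic $h$-principle and of Murphy's theorem, handle by handle, to convert the formal isotopy into a genuine exact symplectic isotopy fixed on $W_0$. Since $W$ is compact, the remark preceding the statement promotes this exact symplectic isotopy to an ambient Hamiltonian isotopy $f_t$ fixed on $W_0$, as claimed.

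I expect the critical handles, $k=n$, to be the only genuine obstacle. The subcritical isotropic $h$-principle, the Weinstein neighborhood identifications, and the collar construction at infinity are all soft and classical, whereas Legendrian embeddings are in general rigid and admit no $h$-principle; it is precisely the looseness built into the definition of a flexible Weinstein cobordism (Theorem~\ref{thm:flex}) that makes Theorem~\ref{thm:Murphy} applicable and breaks this rigidity. The remaining care is bookkeeping: one must check that the genuine Legendrian isotopy produced by Murphy's theorem is compatible both with the formal homotopy $\Phi_t$ and with the exactness of the embedding, so that each handle extension glues to the previously constructed embedding without disturbing the Liouville form, and that looseness, once present along the attaching sphere, is carried along by the embedding into $g(\p W')$ so that Murphy's theorem genuinely applies in the target contact hypersurface.
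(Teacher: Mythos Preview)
The paper does not prove this theorem: immediately after the statement it simply records that part~(\ref{thm:W-emb_non-para}) is proven in \cite{EM13} and part~(\ref{thm:W-emb_1-para}) will appear in \cite{ELM20}. There is therefore no proof in the paper to compare your proposal against; the result is quoted as a black box and used as a tool in Sections~\ref{sec:proof1} and~\ref{sec:proof2}.

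That said, your sketch is a reasonable outline of the strategy actually used in \cite{EM13}: inducting over the Weinstein handle decomposition of the flexible cobordism, handling subcritical attaching spheres by the classical $h$-principle for subcritical isotropic embeddings, and handling critical handles via Murphy's Theorem~\ref{thm:Murphy} applied to the loose Legendrian attaching links. The points you flag as requiring care---that looseness of the attaching link must persist in the image contact hypersurface $g(\p W')$, and that the formal data must be tracked so the handle extensions glue exactly---are genuine and are where the real work in \cite{EM13} lies. Your sketch does not supply those details, but neither does the present paper, since it defers entirely to the cited references.
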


The non-parametric part (\ref{thm:W-emb_non-para}) is proven in \cite{EM13}.
The parametric part (\ref{thm:W-emb_1-para}) will appear in \cite{ELM20}.

\section{h-principle for symplectic embeddings}\label{sec:opensympemb}

In this section, we review Gromov's $h$-principle for symplectic embeddings of open symplectic manifolds~\cite{Gr86}, see also \cite{EM02}.
We continue to use in this paper the term ``symplectic" rather than {\em isometric} as in \cite[3.4.2 (B)]{Gr86} or {\em isosymplectic} as in \cite[12.1.1]{EM02}.

Recall that given a manifold $V$ and a closed subset $A\subset V$ we say that each point of $A$ {\em has an access to infinity} if every compact subset $B\subset A$ has an arbitrarily small open neighborhood $U\supset B$ such that each connected component $C$ of $V\setminus U$ is not compact.
Slightly reformulating Gromov's $h$-principle for symplectic embeddings from \cite{Gr86}, we have the following theorem.
We use below Gromov's notation $\Op A$ for an unspecified neighborhood of a closed subset $A$.

\begin{theorem}[Gromov~\cite{Gr86}, see also ~\cite{EM02}]\label{thm:X-emb}
Let $(W,\om)$ and $(X,\eta)$ be symplectic manifolds of dimension $2n$ and $2m$, respectively.
Suppose that $X$ is an open manifold and $m<n$.
Then the following statements hold.
	\begin{enumerate}
	\item For a formal symplectic embedding $(\varphi, \Phi_t)$ of $(X, \eta)$ into $(W, \om)$, there exists a symplectic embedding $f\colon(X, \eta)\to(W, \om)$ formally isotopic to $(\varphi, \Phi_t)$.\label{thm:X-emb_non-para}
	\item Any symplectic embeddings $f_0, f_1\colon(X, \eta)\to(W, \om)$ which are formally isotopic can be connected by a symplectic isotopy $f_t\colon(X, \eta)\to(W, \om)$, $t\in[0,1]$.\label{thm:X-emb_1-para}
	\item Let $A\subset X$ be a closed subset such that each its point has an access to infinity and $(\varphi,\Phi_t)$ a formal symplectic embedding of $(X,\eta)$ into $(W,\om)$ which is a genuine symplectic embedding on $\Op A$.
	Then, there exists a symplectic embedding $f\colon(X, \eta)\to(W, \om)$ formally isotopic to $(\varphi, \Phi_t)$ rel.~$A$.\label{thm:X-emb_rel}
	\item Let $A$ be as in (\ref{thm:X-emb_rel}).
	Then for any two symplectic embeddings $ f_0,f_1\colon X\to W$ which coincide on $\Op A$ and are formally isotopic rel.~$A$, there exists a symplectic isotopy $f_t\colon(X,\eta)\to(W,\om)$, $t\in[0,1]$, fixed on $\Op A$.\label{thm:X-emb_rel_para}
	\end{enumerate}
\end{theorem}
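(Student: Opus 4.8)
The plan is to run Gromov's $h$-principle along a handle decomposition of $X$ that is adapted to the closed set $A$. The enabling observation is that an open manifold, and more generally the manifold-with-boundary $X\setminus\Op A$ built up from $\p(\Op A)$ when each point of $A$ has an access to infinity, carries an exhausting Morse function all of whose critical points have index $\le 2m-1$: having a component of $X\setminus\Op A$ be compact would force a critical point of top index $2m$, and ``access to infinity'' is precisely the hypothesis ruling this out. Fix such a $\psi\colon X\to\R$, constant on $\p(\Op A)$, with $X_0:=\{\psi\le c_0\}$ a neighborhood of $A$ (for the non-relative statements (\ref{thm:X-emb_non-para}) and (\ref{thm:X-emb_1-para}) take $X_0$ instead to be a disjoint union of balls, one in each component of $X$), choose regular values $c_0<c_1<c_2<\cdots$ interlacing the critical values of $\psi$, and put $X_k:=\{\psi\le c_k\}$. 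Then $X=\bigcup_{k\ge 0}X_k$, each $X_{k+1}$ is obtained from $X_k$ by attaching one handle $H_k=D^{j_k}\times D^{2m-j_k}$ with $j_k\le 2m-1$, and every $H_k$ lies in $X\setminus\Op A$.

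Starting from the genuine symplectic embedding on $X_0$ — Darboux's theorem on the balls, respectively the hypothesis that $(\varphi,\Phi_t)$ is already genuine on $\Op A$ — I would extend the genuine embedding one handle at a time, each extension being a deformation of the formal solution fixed on the previously built region and on $A$. The extension over $H_k$ is a \emph{local, relative} isosymplectic embedding problem: a genuine embedding is prescribed on the attaching region $\p D^{j_k}\times D^{2m-j_k}\subset X_k$, a formal solution is given on all of $H_k$, and one must produce a genuine isosymplectic embedding of $H_k$ agreeing with the prescribed data. This is where flexibility is used, and it is the technical heart of the argument: one first makes the formal solution holonomic along the core $D^{j_k}\times\{0\}$, which has positive codimension in $X$ because $j_k\le 2m-1$, and then, because the codimension $\dim W-\dim X=2(n-m)$ is at least $2$, Gromov's convex-integration construction propagates it to a genuine isosymplectic embedding over the whole thickened handle. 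Both inputs — the low dimension of the core (from $X$ being open) and the positive codimension of $X$ in $W$ (the hypothesis $m<n$) — are indispensable. Iterating and passing to the limit, with the successive deformations supported in shrinking, locally finite neighborhoods of the $H_k$, produces a genuine symplectic embedding $f\colon(X,\eta)\to(W,\om)$ formally isotopic to $(\varphi,\Phi_t)$ rel.\ $A$; this gives (\ref{thm:X-emb_non-para}) and (\ref{thm:X-emb_rel}).

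The parametric statements (\ref{thm:X-emb_1-para}) and (\ref{thm:X-emb_rel_para}) are proved by the same scheme carried out over the parameter interval: $f_0$, $f_1$ together with the given formal isotopy form a formal symplectic embedding of $X$ into $W$ over $[0,1]$ that is genuine over the endpoints $\{0,1\}$ (and, for (\ref{thm:X-emb_rel_para}), genuine over $\Op A$ for all parameters), and the handle induction now deforms the entire one-parameter family to a genuine symplectic isotopy, with the constraints preserved because the handles are disjoint from $A$ and the endpoint solutions are genuine from the start. As indicated, the single-handle isosymplectic extension — holonomy along the core followed by convex integration across the handle, inside the isosymplectic (closed, non-open) differential relation — is the one genuinely hard step, and is exactly the point at which the hypothesis $m<n$ is essential: in the equidimensional or codimension-one cases the relation is rigid and the statement fails. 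The remaining work — arranging the exhaustion, the convergence of the limiting construction, and the relative and parametric refinements — I expect to be routine if somewhat lengthy.
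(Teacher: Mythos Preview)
The paper does not prove this theorem at all: it is stated as a known result, attributed to Gromov~\cite{Gr86} (see also~\cite{EM02}), and no argument is supplied. So there is nothing in the paper to compare your proposal against.

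That said, your sketch is a reasonable outline of the standard proof as it appears in the cited sources. The handle-by-handle induction along an exhausting Morse function with no critical points of index $2m$, the use of the ``access to infinity'' hypothesis to guarantee such a function exists on $X\setminus\Op A$ relative to $\p(\Op A)$, and the identification of the single-handle extension as the technical core requiring $m<n$ are all correct in spirit and match the architecture in~\cite{EM02}. One quibble of terminology: the hard local step for the closed isosymplectic relation is not literally convex integration in Gromov's sense; in~\cite{EM02} it is handled by first reducing to isosymplectic \emph{immersions} (where the $h$-principle does follow from the microextension/holonomic approximation machinery over the subcritical skeleton) and then removing self-intersections using the positive codimension $2(n-m)\ge 2$. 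Your summary ``holonomy along the core followed by convex integration across the handle'' compresses these two distinct ingredients; if you intend to actually write out the proof, be aware that the embedding-vs-immersion passage is where the codimension hypothesis is genuinely consumed, and it deserves its own paragraph.
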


If symplectic forms $\om=d\lambda$ and $\eta=d\mu$ are exact, then one can talk about {\em exact} symplectic embeddings $f\colon X\to W$ which satisfy the condition $f^{\ast}\lambda=\mu+dH$, see Section~\ref{subsec:W-emb} above.

\begin{proposition}\label{prop:to-exact}
Let $(X,\eta=d\mu)$ be a convex symplectic manifold.
Then for any symplectomorphism $f_0\colon(X,\eta)\to(X,\eta)$, there exists a symplectic diffeotopy $f_t\colon(X,\eta)\to(X,\eta)$ such that $f_1$ is exact, i.e.~$f^{\ast}_{1}\mu=\mu+dH$ for a smooth function $H\colon X\to\R$.
\end{proposition}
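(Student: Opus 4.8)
The plan is to remove the cohomological obstruction to exactness by composing $f_0$ with the time-one map of a suitable symplectic isotopy of $X$. Since $f_0$ is a symplectomorphism, the $1$-form $\beta:=f_0^{\ast}\mu-\mu$ is closed, and $f_0$ is exact precisely when $[\beta]=0$ in $H^1(X;\R)$. I claim it suffices to produce a symplectic isotopy $g_t\colon X\to X$, $t\in[0,1]$, with $g_0=\mathrm{id}$ and $[g_1^{\ast}\mu-\mu]=-[\beta]$. Indeed, set $f_t:=f_0\circ g_t$; this is a symplectic diffeotopy starting at $f_0$, and since $g_1$ is smoothly isotopic to the identity --- hence $g_1^{\ast}$ acts trivially on $H^1$ --- the identity $f_1^{\ast}\mu-\mu=(g_1^{\ast}\mu-\mu)+g_1^{\ast}\beta$ gives $[f_1^{\ast}\mu-\mu]=-[\beta]+[\beta]=0$, i.e.~$f_1$ is exact. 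So the content is just that every class in $H^1(X;\R)$ is realized as the ``flux'' of a symplectic isotopy of the Liouville manifold $X$ started at the identity.

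To build $g_t$ I would fix a closed $1$-form $\sigma$ on $X$ with $[\sigma]=-[\beta]$, let $Y$ be the symplectic vector field $\om$-dual to $\sigma$, i.e.~$\iota(Y)\om=\sigma$, and take $g_t$ to be its time-$t$ flow. Then $\frac{d}{dt}g_t^{\ast}\mu=g_t^{\ast}\bigl(\iota(Y)\om+d(\mu(Y))\bigr)=g_t^{\ast}\sigma+d\bigl(g_t^{\ast}(\mu(Y))\bigr)$, so $g_1^{\ast}\mu-\mu$ equals $\int_0^1 g_t^{\ast}\sigma\,dt$ up to an exact form; since every $g_t$ is isotopic to the identity, $[g_1^{\ast}\mu-\mu]=[\sigma]=-[\beta]$, as required.

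The one genuine difficulty is that $Y$ need not be complete, so one cannot take an arbitrary representative of $-[\beta]$: the form $\beta$ is typically uncontrolled near infinity, since no hypothesis constrains $f_0$ there. Here convexity of $X$ must be used to choose $\sigma$ ``standard at infinity'' with respect to the Liouville structure. In the finite-type case this is transparent: writing $X$ as the completion of a Liouville domain $V_1$ with cylindrical end $[1,\infty)\times\p_\infty V$ on which $\mu=e^s(\lambda|_{\p_\infty V})$ and $Z=\p_s$, one represents the restriction of $-[\beta]$ to $\p_\infty V$ by a closed $1$-form $\bar\sigma$ and, after correcting a global representative over a compact collar by an exact form, arranges that $\sigma$ equals the pullback of $\bar\sigma$ on $[2,\infty)\times\p_\infty V$. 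A short computation then shows that there $Y=e^{-s}\wt Y$ for an $s$-independent vector field $\wt Y$ tangent to the contact structure up to a bounded $\p_s$-term; equivalently $[Z,Y]=-Y$, so $Y$ decays exponentially at infinity and is in particular complete, and the previous two paragraphs finish the proof. For a general convex symplectic manifold I would argue the same way using the exhaustion $\bigcup_j V_j=X$ and the completeness of $Z$ in place of a single cylindrical end, choosing $\sigma$ near infinity so that $L_Z\sigma=0$ and $\sigma(Z)=0$, which again forces $[Z,Y]=-Y$ and hence completeness of $Y$. I expect that producing this cohomologically correct, Liouville-adapted representative $\sigma$ --- especially when the core of $X$ is non-compact --- will be the only delicate point; everything else is the flux bookkeeping of the first two paragraphs.
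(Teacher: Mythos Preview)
Your overall strategy matches the paper's: write $f_0^{\ast}\mu-\mu=-\theta$ for a closed $1$-form $\theta$, replace $\theta$ by a cohomologous $\wh\theta$ whose $\eta$-dual symplectic vector field $Y$ is complete, and set $f_t:=f_0\circ Y^t$. Your flux computation is correct, and your treatment of the finite-type case---making the representative a pullback from the ideal boundary on the cylindrical end---is exactly the argument of \cite[Lemma~11.2]{CE12}, which the paper cites for that case.

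The gap is the passage to infinite type. You propose to arrange $L_Z\sigma=0$ and $\sigma(Z)=0$ ``near infinity'' so that $[Z,Y]=-Y$, and to deduce completeness from this. Two things fail. First, when $\Core(X,\mu)$ is non-compact there is no region ``near infinity'' disjoint from the core on which one can impose $Z$-invariance of $\sigma$; the construction you describe is precisely the finite-type picture and has no evident analogue here. Second, even if one had $[Z,Y]=-Y$ outside a compact set, this by itself does not imply completeness of $Y$: the exponential decay you invoke is decay along forward $Z$-trajectories issuing from a \emph{compact} hypersurface, and in the infinite-type case a $Y$-trajectory can escape to infinity while staying near the non-compact core, where $Z$ is small and the relation $[Z,Y]=-Y$ gives no bound on $|Y|$.

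The paper replaces the decay argument by a barrier argument. Via Lemma~\ref{lm:good-exhaustion} it chooses an exhaustion $X=\bigcup_j X_j$ whose successive collars $U_j\cong[1,1+2T_j]\times\p X_j$ are wide relative to $T_j:=\max\{1,\max_{\p X_j}|\theta(R_j)|\}$, and then modifies $\theta$ by an exact form so that on the middle half $\wh U_j$ of each $U_j$ it coincides with the pullback $\pi_j^{\ast}(\theta|_{\p X_j})$. Lemma~\ref{lm:dual-to-closed} then gives $|ds(Y)|\le T_j$ on $\wh U_j$, so any $Y$-trajectory spends time $\ge1$ crossing each $\wh U_j$ and hence can meet only finitely many of them in finite time, which yields completeness. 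This discrete collar estimate is the idea missing from your sketch, and it is what survives non-compactness of the core.
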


For the case when $(X,\eta)$ is a finite type convex symplectic manifold, this was proven in \cite[Lemma 11.2]{CE12}.
To prove the statement in the general case we will need the following two lemmas.

\begin{lemma}\label{lm:dual-to-closed}
Let $((0,\infty)\times\Sigma,\om=d(s\alpha))$ be the symplectization of a contact manifold $(\Sigma,\xi=\ker\alpha)$ with a fixed contact form $\alpha$.
Let $R$ be the Reeb vector field of $\alpha$ (i.e.~$\iota(R)d\alpha=0$ and $\alpha(R)=1$).
For a closed $1$-form $\theta$ on $\Sigma$, denote $\wh\theta:=\pi^{\ast}\theta$ and let $Y$ be the symplectic vector field $\om$-dual to $\wh\theta$, i.e.~$\iota(Y)\om=\wh\theta$.
Here $\pi\colon(0,\infty)\times\Sigma\to\Sigma$ is the projection to the second factor.
Then one has the equality $ds(Y) =\theta(R)$.
\end{lemma}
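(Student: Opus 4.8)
The plan is to compute the $\om$-dual field $Y$ explicitly in the product coordinates and then pair the resulting identity with the Reeb field. Write $\om=d(s\alpha)=ds\wedge\alpha+s\,d\alpha$, where by abuse of notation $\alpha$ and $d\alpha$ now denote the pullbacks $\pi^{\ast}\alpha$ and $\pi^{\ast}d\alpha$ to $(0,\infty)\times\Sigma$; note that both of these forms annihilate $\p_s$. I would also record the two defining properties of $R$ that will be used: $\alpha(R)=1$ and $\iota(R)d\alpha=0$.

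Next I would split $Y=a\,\p_s+W$, where $a:=ds(Y)$ is a function and $W$ is a vector field everywhere tangent to the $\Sigma$-slices, so that $ds(W)\equiv0$. Using $\iota(\p_s)\alpha=\iota(\p_s)d\alpha=0$ one gets
\[
\iota(Y)\om \;=\; a\,\alpha-\alpha(W)\,ds+s\,\iota(W)d\alpha \;=\; \wh\theta .
\]
Since $\wh\theta=\pi^{\ast}\theta$ has no $ds$-component, comparing $ds$-components forces $\alpha(W)=0$, i.e.\ $W$ is a section of $\xi=\ker\alpha$, and the remaining terms yield the identity of $1$-forms $a\,\alpha+s\,\iota(W)d\alpha=\pi^{\ast}\theta$.

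Finally I would evaluate both sides of this last identity on $R$. On the left, $\alpha(R)=1$, while $\bigl(\iota(W)d\alpha\bigr)(R)=d\alpha(W,R)=-\bigl(\iota(R)d\alpha\bigr)(W)=0$. On the right, $(\pi^{\ast}\theta)(R)=\theta(R)$ since $R$ is $\pi$-related to the Reeb field on $\Sigma$ (and $\theta$ lives on $\Sigma$). Hence $a=\theta(R)$, which is exactly the asserted equality $ds(Y)=\theta(R)$.

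I do not expect a genuine obstacle: the statement is an algebraic identity obtained by unwinding the definition of $\om$-duality on the symplectization together with the two defining equations for $R$. The only points deserving care are the bookkeeping of the $ds$- versus horizontal components of $\iota(Y)\om$ in the splitting $T\bigl((0,\infty)\times\Sigma\bigr)=\R\p_s\oplus T\Sigma$, and the remark that the closedness of $\theta$ is not used in the identity itself — it is needed only so that $Y$ is actually symplectic, since $L_Y\om=d\iota(Y)\om=d\wh\theta=\pi^{\ast}d\theta$, which is why the hypothesis appears in the statement.
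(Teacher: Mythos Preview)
Your proof is correct and follows essentially the same approach as the paper: decompose $Y$ relative to the splitting $T((0,\infty)\times\Sigma)=\R\p_s\oplus T\Sigma$, then evaluate the identity $\iota(Y)\om=\wh\theta$ on the Reeb field $R$. The paper's version is marginally more direct---it writes $Y=aR+b\p_s+Y_\xi$ from the outset and computes $\theta(R)=\om(Y,R)=b=ds(Y)$ in a single line---whereas you first deduce $\alpha(W)=0$ as an intermediate step, but the substance is identical.
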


\begin{proof}
Let us write $Y=aR+b\frac{\p}{\p s}+Y_\xi$, where $Y_\xi\in\xi$ and $a,b\colon(0,\infty)\times\Sigma\to\R$.
Then
\[
\theta(R)=(\iota(Y)(ds\wedge\alpha+sd\alpha))(R)=ds\wedge\alpha(aR+b\frac{\p}{\p s}+Y_\xi,R)+sd\alpha(Y,R)=b=ds(Y).
\]
\end{proof}

\begin{lemma}\label{lm:good-exhaustion}
Let $(X,d\mu)$ be a convex symplectic manifold and $Z$ the Liouville field corresponding to $\mu$.
Then there exists an exhaustion $X=\bigcup\limits_{j=1}^\infty X_j$ by compact domains with smooth boundaries transverse to $Z$ such that the following condition is satisfied: for any $j\geq 1$ one has
	\begin{align}\label{eq:disjunction}
	Z^{\ln(1+2T_j)}(\p X_j)\subset\Int X_{j+1},\quad j\geq1,
	\end{align}
where $T_j:=\max\{1,\mathop{\max}\limits_{x\in\p X_j}|\theta(R_j(x))|\}$ and $R_j$ is the Reeb vector field of the contact form $\mu|_{\p X_j}$.
\end{lemma}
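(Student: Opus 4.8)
The plan is to obtain the required exhaustion as a cofinal subsequence of an arbitrary one. Fix, by the definition of symplectic convexity, an exhaustion $X=\bigcup_{k=1}^{\infty}V_k$, $V_k\subset V_{k+1}$, by compact Liouville domains with smooth boundaries transverse to $Z$; since $Z$ is complete, its time-$t$ flow $Z^{t}$ is a globally defined diffeomorphism of $X$ for every $t$. Fix also the closed $1$-form $\theta$ from the statement. I will choose a strictly increasing function $\phi\colon\mathbb{Z}_{\geq 1}\to\mathbb{Z}_{\geq 1}$ and set $X_j:=V_{\phi(j)}$.

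The function $\phi$ is defined recursively. Put $\phi(1):=1$. Assuming $\phi(j)$ has been chosen, note that $\partial V_{\phi(j)}$ is compact, so the Reeb vector field $R_{\phi(j)}$ of the contact form $\mu|_{\partial V_{\phi(j)}}$ is a smooth vector field on it and $T_j:=\max\{1,\max_{x\in\partial V_{\phi(j)}}|\theta(R_{\phi(j)}(x))|\}$ is a well-defined finite number, depending only on $V_{\phi(j)}$ and on the fixed data $\mu,\theta$. In particular $\ln(1+2T_j)$ is a fixed positive number, and $Z^{\ln(1+2T_j)}(\partial V_{\phi(j)})$ is a compact subset of $X$, being the image of a compact set under a diffeomorphism. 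Since $\{\Int V_k\}_{k\geq 1}$ is an open exhaustion of $X$, there is an index $k>\phi(j)$ with $\Int V_k\supset Z^{\ln(1+2T_j)}(\partial V_{\phi(j)})$; set $\phi(j+1):=k$.

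It remains to check that $X_j:=V_{\phi(j)}$ has the asserted properties. Each $X_j$ is a compact domain with smooth boundary transverse to $Z$ because it is one of the $V_k$; the inclusions $X_j\subset X_{j+1}$ hold because $\phi$ is increasing; and $\bigcup_j X_j=X$ because $\{V_{\phi(j)}\}_j$ is cofinal in the exhausting sequence $\{V_k\}_k$ (indeed $\phi(j)\geq j$ by induction). Finally, the number $T_j$ read off from $\partial X_j=\partial V_{\phi(j)}$ is precisely the one used in the recursion, so $Z^{\ln(1+2T_j)}(\partial X_j)\subset\Int V_{\phi(j+1)}=\Int X_{j+1}$, which is exactly condition \eqref{eq:disjunction}.

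There is no genuine difficulty here: the argument is a routine diagonal extraction. The single point that must be respected is the order of the construction — the gap $\ln(1+2T_j)$ is determined by $X_j$ alone, and only afterwards is $X_{j+1}$ chosen large enough to contain the flowed-out hypersurface $Z^{\ln(1+2T_j)}(\partial X_j)$ in its interior. This is why one passes to a subsequence of a pre-existing exhaustion, rather than, say, attempting to take $X_{j+1}=Z^{t_j}(X_j)$, which in general would fail to exhaust $X$ when the core of $(X,\lambda)$ is non-compact. The specific form of the exponent is immaterial for this lemma — any quantity depending only on $X_j$ would serve — and is tailored to the later use of this lemma together with Lemma~\ref{lm:dual-to-closed} in the proof of Proposition~\ref{prop:to-exact}.
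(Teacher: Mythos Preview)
Your proof is correct and follows essentially the same strategy as the paper's: start from an arbitrary exhaustion and inductively enlarge the gap between consecutive domains so that \eqref{eq:disjunction} holds, using that $T_j$ depends only on $X_j$. The only mechanical difference is that the paper pushes the remaining exhaustion forward by the Liouville flow $Z^{\ln(1+3T_j)}$ at each step, whereas you simply pass to a cofinal subsequence; your version is if anything slightly cleaner.
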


\begin{proof}
We begin with any exhaustion $\bigcup\limits_{j=1}^\infty X^0_j$ by compact domains with smooth boundaries transverse to $Z$ and then inductively modify it to ensure the property \eqref{eq:disjunction} by using the following procedure.
Set $X_1:=X_1^0$ and $T_1:=\max\{1,\mathop{\max}\limits_{x\in\p X_1}\left|\theta(R_1(x))\right|\}$, where $R_1$ is the Reeb vector field of the contact form $\mu|_{\p X_1}$.
Define $X^1_j:=Z^{\ln(1+3T_1)}(X^0_j)$, $j\geq 2$, and denote $X_2:=X_2^1$ and $T_2:= \max\{1,\mathop{\max}\limits_{x\in\p X_2}\left|\theta(R_2(x))\right|\}$, where $R_2$ is the Reeb vector field of the contact form $\mu|_{\p X_2}$.
Define $X^2_j:=Z^{\ln(1+3T_2)}(X^1_j)$, $j\geq 3$, and denote $X_3:=X_3^2$.
Continuing this process we construct the required exhaustion.
\end{proof}

\begin{proof}[Proof of Proposition \ref{prop:to-exact}]
We have $f_{0}^{\ast}\mu=\mu-\theta$ for a closed $1$-form $\theta$.
Let $Z$ be the Liouville field corresponding to the Liouville form $\mu$.
Choose an exhaustion $X=\bigcup\limits_{j=1}^\infty X_j$ which satisfies the property \eqref{eq:disjunction}.

Consider disjoint domains
\[
U_j:=\bigcup\limits_{t\in[0,\ln(1+2T_j)]}Z^t(\p X_j)\subset X
\]
and set $\alpha_j:=\mu|_{\p X_j}$, $j\geq1$.
These domains can be identified with the domains $[1,1+2T_j]\times\p X_j$ in the symplectizations $((0,\infty)\times\p X_j, d(s\alpha_j))$ of the contact manifolds $(\p X_j,\ker\alpha_j)$ via Liouville isomorphisms
\[
\phi_j\colon( [1,1+2T_j]\times\p X_j, s\alpha_j)\to(U_j, \mu)\ :\ (s,x)\mapsto Z^{\ln s}(x),
\]
where $s\in[1,1+2T_j]$ and $x\in\p X_j$.
Let $\theta_j$ be the closed 1-form on $U_j$ defined by the formula
\[
\theta_j:=(\phi_j)_{\ast}\pi_j^{\ast}(\theta|_{\p X_j}),
\]
where $\pi_j\colon [1,1+2T_j]\times\p X_j\to\p X_j$ is the projection to the second factor.
Note that $\theta|_{U_j}-\theta_j=dH_j$ for a smooth function $H_j\colon U_j\to\R$.
Let $\delta_j\colon U_j\to\R$ be a cut-off function supported in $U_j$ and equal to $1$ on $\wh U_j:=\phi_j([1+T_j/2,1+3T_j/2]\times\p X_j)$.
Then the closed $1$-form
\[
\wh\theta=\theta-dG,\quad\hbox{where }G=\left(\sum\limits_{ j=1}^{\infty}\delta_{j}H_{j}\right),
\]
coincides with $\theta_j$ on $\wh U_j$ for all $j\geq 1$.
We claim that the symplectic vector field $Y$ which is $\eta$-dual to $\wh\theta$ is complete, i.e.~its flow $Y^t$ is defined for all time $t\in\R$.
Indeed, according to Lemma~\ref{lm:dual-to-closed} any trajectory entering $U_i$ spends there the time $\geq 1$, and hence in time $\leq T$ it can cross only finitely many domains $U_j$.
Note that $L_{Y}\wh\theta=\iota(Y)d\wh\theta+d(\iota(Y)\wh\theta)=0$ and $L_{Y}\mu=\iota(Y) \eta +d(\mu(Y))=\wh\theta+ dH'$, where $H':=\mu(Y)$.
Hence, by defining $H'_t:=\int_{0}^{t}(H'\circ Y^s)\,ds$, we compute $(Y^t)^{\ast}\wh\theta=\wh\theta$ and $(Y^t)^{\ast} \mu=\mu+t\wh\theta+dH'_t$.
We can now define the required isotopy by the formula $f_t:=f_0\circ Y^{t}$.
Then
	\begin{align*}
	f_{1}^{\ast}\mu & =(Y^{1})^{\ast}f_{0}^{\ast}\mu=(Y^1)^{\ast}(\mu-\theta)=(Y^1)^{\ast}(\mu-\wh\theta-dG)\\
	& =\mu+ \wh\theta+dH'_1- \wh\theta-d(G\circ Y^1) =\mu+ d(H'_1-G\circ Y^1).
	\end{align*}
\end{proof}
 
Hence, if in Theorem~\ref{thm:X-emb} we assume that $\om=d\lambda$, $\eta=d\mu$, and $(X,d\mu)$ is symplectically convex, then we can arrange the constructed symplectic embeddings in (\ref{thm:X-emb_non-para}) and (\ref{thm:X-emb_rel}) be exact, and the symplectic isotopies in (\ref{thm:X-emb_1-para}) and (\ref{thm:X-emb_rel_para}) be Hamiltonian.

\section{Liouville homotopy vs symplectomorphism}\label{sec:homotopy-symplectomorphism}

The following notion of Liouville homotopy, which formalizes the concept of a smooth family of convex symplectic structures on a given manifold, was introduced in \cite{CE12}.
A smooth family $\mu_s$, $s\in[0,1]$, of Liouville forms on a manifold $X$ is called a {\it simple Liouville homotopy} if there exists a smooth family of exhaustions $X=\bigcup_{k=1}^{\infty}X_s^k$ by compact domains $X_s^k\subset X$ with smooth boundaries along which the corresponding Liouville field $Z_s$ is outward pointing.
A {\em Liouville homotopy} is a composition of finitely many simple Liouville homotopies.
It was shown in \cite[Proposition 11.8]{CE12} that given a Liouville homotopy $\mu_s$ there exists an isotopy $\varphi_s\colon X\to X$, $s\in[0,1]$, starting from $\varphi_0=\mathrm{id}_X$ such that $\varphi_{s}^{\ast}\mu_s=\mu_0+dH_s$, and in particular the forms $\wh\mu_s:=\varphi_s^*\mu_s$ are Liouville for the same symplectic structure $\om=d\mu_0$.

The following proposition shows that  the converse is also true.

\begin{proposition}\label{prop:symp-hom}
Let $(X,\om)$ and $(X',\om')$ be two symplectomorphic convex symplectic manifolds.
Then there exist a symplectomorphism $\varphi\colon(X,\om)\to(X',\om')$ and a Liouville homotopy $\mu_s$ connecting $\mu_0=\mu$ and $\mu_1=\varphi^{\ast}\mu'$.
\end{proposition}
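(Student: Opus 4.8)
The plan is to take $\varphi$ to be any symplectomorphism $\psi\colon(X,\om)\to(X',\om')$ and to construct a Liouville homotopy directly from $\mu_0=\mu$ to $\mu_1=\nu:=\psi^{\ast}\mu'$. First note that $\nu$ is again a convex Liouville form for $\om=\psi^{\ast}\om'$: its $\om$-dual Liouville field is $W:=(\psi^{-1})_{\ast}Z'$, where $\iota(Z')\om'=\mu'$, and $W$ is complete and outward transverse to the images $\psi^{-1}(X'_{k})$ of an exhaustion $X'=\bigcup_{k}X'_{k}$. The difference $\theta:=\mu-\nu$ is a closed $1$-form, not necessarily exact; this is harmless, because the symplectic form is allowed to vary along a Liouville homotopy. (Alternatively one could first run the construction from the proof of Proposition~\ref{prop:to-exact}, replacing $\psi$ by $\psi\circ Y^{1}$ with $Y$ the $\om$-dual of a closed form cohomologous to $\theta$, to reduce to the case $\theta=dH$; but this is not needed below.)

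Next I would fix exhaustions $X=\bigcup_{j}P_{j}$ by $Z$-Liouville domains and $X=\bigcup_{k}Q_{k}$ by $W$-Liouville domains, where $Z$ is the $\om$-dual of $\mu$, and interleave cofinal subsequences — enlarging domains by flowing along $Z$ or $W$ when necessary — into a single nested exhaustion $P_{j_{1}}\subset Q_{k_{1}}\subset P_{j_{2}}\subset Q_{k_{2}}\subset\cdots$ whose consecutive ``gaps'' $G_{i}:=P_{j_{i+1}}\setminus\Int Q_{k_{i}}$ are as wide as desired. Using that $\om$ is non-degenerate and $|\theta|$ is bounded on the compact set $G_{i}$ — here an estimate in the spirit of Lemma~\ref{lm:good-exhaustion} applied to $\theta$ is used — I will make each $G_{i}$ wide enough that any function decreasing from $1$ to $0$ across $G_{i}$ can be taken with differential so small that $\om-d\rho\wedge\theta$ stays non-degenerate on $G_{i}$. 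Then choose a smooth family of cutoffs $\rho_{s}\colon X\to[0,1]$, $s\in[0,1]$, with $\rho_{0}\equiv0$, $\rho_{1}\equiv1$, such that at parameter $s$ the function $\rho_{s}$ equals $1$ on $Q_{k_{i(s)}}$, equals $0$ outside $P_{j_{i(s)+1}}$, transitions only inside $G_{i(s)}$ with uniformly small differential, and the front $\{0<\rho_{s}<1\}$ sweeps steadily outward, $i(s)\nearrow\infty$ as $s\to1$. Set $\mu_{s}:=\mu-\rho_{s}\theta$; then $d\mu_{s}=\om-d\rho_{s}\wedge\theta$ is symplectic, $\mu_{0}=\mu$, and $\mu_{1}=\mu-\theta=\nu$.

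It remains to exhibit a smooth-in-$s$ exhaustion to which the Liouville field $Z_{s}$ of $\mu_{s}$ is outward transverse. On $\{\rho_{s}\equiv1\}$ one has $\mu_{s}=\nu$ and $Z_{s}=W$, on $\{\rho_{s}\equiv0\}$ one has $\mu_{s}=\mu$ and $Z_{s}=Z$, so $Z_{s}$ is uncontrolled only on the thin transition region, which by construction sits inside $G_{i(s)}$. Hence one uses the exhaustion $Q_{k_{1}}\subset\cdots\subset Q_{k_{i(s)}}\subset P_{j_{i(s)+1}}\subset P_{j_{i(s)+2}}\subset\cdots$: near $\partial Q_{k_{i'}}$ with $i'\le i(s)$ we have $Z_{s}=W$, outward transverse since $Q_{k_{i'}}$ is a $W$-Liouville domain; near $\partial P_{j_{i'}}$ with $i'>i(s)$ we have $Z_{s}=Z$, outward transverse since $P_{j_{i'}}$ is a $Z$-Liouville domain; and the transition region is swallowed by the gap between $Q_{k_{i(s)}}$ and $P_{j_{i(s)+1}}$, so no member of the exhaustion meets it. The main obstacle I anticipate is making this list of domains vary smoothly with $s$ across the instants where $i(s)$ increments and the indexing changes; I would handle it by letting the front move slowly and, while it crosses $G_{i}$, inserting one extra member that grows continuously along the $W$-flow just behind the front and interpolates between $Q_{k_{i}}$ and $Q_{k_{i+1}}$, so that no domain is ever created or destroyed abruptly — with a little additional care to keep the outer members $P_{j_{i'}}$ always ahead of the front. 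Granting this, $\mu_{s}$ is a simple Liouville homotopy from $\mu$ to $\psi^{\ast}\mu'$, and $(\varphi,\mu_{s})=(\psi,\mu_{s})$ is the required pair.
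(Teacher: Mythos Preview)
Your approach is genuinely different from the paper's, and the difference is instructive. The paper does \emph{not} bypass Proposition~\ref{prop:to-exact}: that proposition is the very first step, used to choose $\varphi$ so that $\varphi^{\ast}\mu'=\mu+dH$ with $H$ a smooth function. Once $\theta$ is exact, the symplectic form stays equal to $\om$ along any family $\mu+t\,dF$, so there is no non-degeneracy condition to control and no need to let the exhaustion move. The paper then interleaves the two exhaustions so that $X_j^1\subset\Int X_j^0\subset X_j^0\subset\Int X_{j+1}^1$, builds a single function $\wt H$ equal to $0$ on $\Op\bigl(\bigcup_j\p X_j^0\bigr)$ and equal to $H$ on $\Op\bigl(\bigcup_j\p X_j^1\bigr)$, and takes the composition of two simple homotopies
\[
\mu_s=\begin{cases}\mu+2s\,d\wt H,& s\in[0,1/2],\\ \mu+d\wt H+(2s-1)\,d(H-\wt H),& s\in[1/2,1],\end{cases}
\]
with the \emph{constant} exhaustions $\{X_j^0\}$ and $\{X_j^1\}$, respectively. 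Since $d\wt H$ vanishes near each $\p X_j^0$ and $d(H-\wt H)$ vanishes near each $\p X_j^1$, the Liouville field agrees with $Z$ (respectively $Z_1$) along those boundaries, and transversality is automatic.

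Your construction instead keeps $\theta$ merely closed, allows $d\mu_s$ to vary, and attempts a \emph{single} simple Liouville homotopy with a front sweeping to infinity as $s\to1$. This forces the exhaustion to vary with $s$, and the obstacle you flag is real and not resolved by your sketch. When $i(s)$ increments, the $(i{+}1)$-st member of your exhaustion must pass from a $Z$-Liouville domain $P_{j_{i+1}}$ to a $W$-Liouville domain $Q_{k_{i+1}}$; these are transverse to different vector fields, and an interpolating hypersurface produced by the $W$-flow need not stay outside the moving transition region while the front crosses $Q_{k_{i+1}}\setminus P_{j_{i+1}}$ (a region your description leaves unaccounted for, since you stipulate the transition lies only in the gaps $G_i$). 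The endpoint $s=0$ is also not covered: you require $\rho_0\equiv0$, yet for every $s>0$ you posit $\rho_s\equiv1$ on $Q_{k_{i(s)}}$, so how the front is born is missing. These issues are probably repairable with enough care, but the point is that invoking Proposition~\ref{prop:to-exact} --- which you mention parenthetically and then discard as ``not needed'' --- is exactly what lets the paper sidestep all of them with a two-line argument using constant exhaustions.
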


\begin{proof}
According to Proposition~\ref{prop:to-exact} the symplectomorphism $\varphi$ can be chosen to satisfy $\varphi^{\ast}\mu'=\mu+dH$ for a smooth function $H$ on $X$.
Choose the exhaustions $X=\bigcup\limits_{j=1}^\infty X_j^0$ and $X=\bigcup\limits_{j=1}^\infty X_j^1$ defining convex structures for forms $\mu_0:=\mu$ and $\mu_1:=\mu+dH$, respectively.
We can arrange that
\[
X_j^1\subset \Int X_j^0\subset X_j^0\subset\Int X_{j+1}^{1}
\]
for all $j\geq1$.
Let $\wt H$ be a smooth function which is equal to $0$ on $\Op\left(\bigcup\limits_{j=1}^{\infty}\p X_j^0\right)$ and equal to $H$ on 
$\Op\left(\bigcup\limits_{j=1}^{\infty}\p X_j^1\right)$.
Then the required Liouville homotopy can be defined as the composition of two simple Liouville homotopies:
\[
\mu_s:=
	\begin{cases}
	\mu_0+2sd\wt H, & s\in[0,1/2];\cr
	\mu_0+d\wt H+(2s-1)d(H-\wt H), & s\in[1/2,1].
	\end{cases}
\]
with the constant exhaustions $X=\bigcup\limits_{j=1}^\infty X_j^0$ and $X=\bigcup\limits_{j=1}^\infty X_j^1$, respectively.
\end{proof}

The notion of {\em Weinstein homotopy} can be defined in a similar way.
However, it is unknown whether two Weinstein structures on the same symplectic manifold are homotopic.

\section{Proof of Theorem~\ref{thm:stab-convex}}\label{sec:proof1}

If $n=2$ then $\dim V=2$.
Any $2$-dimensional convex symplectic manifold is Weinstein and the theorem is trivially true.
Hence, we assume that $n\geq 3$.

Choose a Liouville form $\mu_V$ on the convex symplectic manifold $V$ and denote by $\mu$ the corresponding stabilized Liouville form $\mu_V+\frac12(xdy-ydx)$ on $X=V\times\R^2$.
Denote $\eta:=d\mu$.
By the assumption, $X$ is of Morse type $\leq n$.
Take an exhausting Morse function $\phi\colon X\to\R$ without critical points of index $>n$.
Applying Theorem~\ref{thm:flex}~(\ref{thm:flex_exist}) to the pair $(\eta,\phi)$, we obtain a flexible Weinstein structure $\mathfrak{W}=(\om=d\lambda,\phi)$ on $X$ such that symplectic forms $\eta$ and $\om$ are homotopic as non-degenerate $2$-forms.
For the sake of convenience, the ambient space of $\mathfrak{W}$ is denoted by $W$ instead of $X$.
Thus there exists a pair $(\varphi,\Phi_{s})$ where $\varphi\colon X\to W$ is the identity and $\Phi_{s}\colon TX\to TW$, $s\in[0,1]$, is a homotopy of bundle isomorphisms covering $\varphi$ starting at $\Phi_{0}=d\varphi$ and ending at a symplectic isomorphism $\Phi_{1}=\Phi\colon(TX,\eta)\to(TW,\om)$.

The goal of this section is to construct an exact symplectomorphism $F\colon(X,\mu)\to(W,\lambda)$.
This will be given by the telescope construction, the so-called {\em Mazur trick}, see \cite{Ma61}, following the scheme of the proof in \cite[Proposition 2.2.A]{EG91}.
Take exhaustions $X_{1}\subset X_{2}\subset\cdots\subset X$, $\bigcup\limits_{i=1}^{\infty}X_{i}=X$, by Liouville subdomains of $X$ and $W_{1}\subset W_{2}\subset\cdots\subset W$, $\bigcup\limits_{i=1}^{\infty}W_{i}=W$, by Weinstein subdomains of $W$.
 
Set $\mu_{i}=\mu|_{X_{i}}$ and $\lambda_{i}=\lambda|_{W_{i}}$ for $i\geq1$.
The construction is split into several steps.

\begin{lemma}\label{lem:LW}
For each $i\geq 1$ there exists an exact symplectic embedding $f_{i}\colon(X_{i},\mu_{i})\to(W,\lambda)$ 
which is formally isotopic to $(\varphi|_{X_{i}},\Phi|_{TX_{i}})$.
\end{lemma}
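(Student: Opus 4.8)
The plan is to reduce Lemma \ref{lem:LW} to Gromov's $h$-principle for symplectic embeddings in positive codimension, Theorem \ref{thm:X-emb}, exploiting that $X = V\times\R^2$ is a $1$-stabilization. Since $X_i$ is compact it is contained in a product $V_j\times D^2_R$, where $V_j\subset V$ is a Liouville subdomain and $D^2_R\subset\R^2$ is a round disk of radius $R$; it therefore suffices to construct an exact symplectic embedding $g\colon(V_j\times D^2_R,\mu)\to(W,\lambda)$ formally isotopic to the inclusion equipped with $\Phi$, and to set $f_i := g|_{X_i}$. Here $\mu = \mu_V + \frac12(x\,dy - y\,dx)$ is the stabilized Liouville form and, recall, $\varphi = \mathrm{id}_X$. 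The gain in passing from $X_i$ to $V_j$ is a drop by $2$ in dimension, $\dim V_j = 2n-2 < 2n = \dim W$, which makes Gromov's $h$-principle applicable to $V_j$ with \emph{no} Weinstein hypothesis --- crucial, since $V_j$ need not itself be Weinstein.

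First I would embed $\wh V_j$, the completion of $V_j$ (a finite-type convex symplectic manifold of dimension $2n-2$), into $W$. Since $V_j$ is a Liouville subdomain of $V$ and the Liouville field of $V$ is complete, $\wh V_j$ symplectically embeds into $V\times\{0\}\subset X = W$. Along $V\times\{0\}$ one has $TX = TV\oplus\R^2$ with $\eta = d\mu_V\oplus\om_{\mathrm{st}}$, and $\Phi_s$ restricts to a homotopy of this splitting from the identity to the symplectic isomorphism $\Phi$; composing the $TV$-summand of $\Phi_s$ with the differential of $\wh V_j\hookrightarrow V\times\{0\}$ produces a formal symplectic embedding of $(\wh V_j,d\mu_V)$ into $(W,\om)$ whose formal normal bundle is the trivial plane bundle $\R^2$. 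By Theorem \ref{thm:X-emb}(\ref{thm:X-emb_non-para}), together with the exactness remark following it (applicable as $\wh V_j$ is convex symplectic), there is an exact symplectic embedding $h\colon\wh V_j\to W$ in this formal class; in particular the symplectic normal bundle of $h(\wh V_j)$ is trivial. By the symplectic neighborhood theorem $h$ extends to an exact symplectic embedding of a tubular neighborhood, so that over the compact piece $V_j$ one obtains an exact symplectic embedding $\tau\colon(V_j\times D^2_\epsilon,\mu)\to(W,\lambda)$ for some $\epsilon>0$.

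Next I would enlarge the disk by conjugating with Liouville flows. Let $Z = Z_V\oplus Z_{\mathrm{st}}$ be the complete Liouville field of $(X,\mu)$ and $Z_W$ the forward-complete Liouville field of $(W,\lambda)$. For $T\geq 0$ the map $Z_W^{\,T}\circ\tau\circ Z^{-T}$ is a composition of exact symplectic maps, is defined on $Z^{T}(V_j\times D^2_\epsilon) = Z_V^{T}(V_j)\times D^2_{\epsilon e^{T/2}}\supseteq V_j\times D^2_{\epsilon e^{T/2}}$, and a short computation using $(Z_W^{\,T})^{*}\lambda = e^{T}\lambda$ and $(Z^{-T})^{*}\mu = e^{-T}\mu$ gives $(Z_W^{\,T}\circ\tau\circ Z^{-T})^{*}\lambda = \mu + d(\cdot)$. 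Choosing $T$ with $\epsilon e^{T/2}\geq R$ produces the desired exact symplectic embedding $g$ of $V_j\times D^2_R$ into $W$, and $f_i := g|_{X_i}$. Tracking the homotopies throughout --- the formal isotopy supplied by Theorem \ref{thm:X-emb}(\ref{thm:X-emb_non-para}), the canonical identification of the symplectic neighborhood theorem, and the ambient isotopies $Z_W^{\,t}$ and $Z^{-t}$ --- then shows that $f_i$ is formally isotopic to $(\varphi|_{X_i},\Phi|_{TX_i})$.

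I expect the main difficulty to be the bookkeeping of the formal data rather than any single geometric step: one must feed into Theorem \ref{thm:X-emb} precisely the formal symplectic embedding of $\wh V_j$ induced by $\Phi$, arrange the tubular neighborhood so that its normal $\R^2$ is identified with the $\R^2$-summand of $\Phi$, and verify that conjugation by Liouville flows does not alter the formal isotopy class --- only then does $f_i$ belong to the prescribed class, which is what the telescope (``Mazur trick'') construction in the rest of the section requires in order to interleave the $f_i$ with exact symplectic embeddings of the Weinstein subdomains $W_i$ into $X$ (the latter provided directly by Theorem \ref{thm:W-emb}(\ref{thm:W-emb_non-para})). A secondary point is to note that the flexible Weinstein structure on $W$ furnished by Theorem \ref{thm:flex}(\ref{thm:flex_exist}) may be taken to have a forward-complete Liouville field, legitimizing the disk-enlargement; this is standard.
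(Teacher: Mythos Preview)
Your proposal is correct and follows essentially the same strategy as the paper: apply Gromov's $h$-principle (Theorem~\ref{thm:X-emb}(\ref{thm:X-emb_non-para})) to the $(2n-2)$-dimensional slice to get a codimension-two exact symplectic embedding, extend to a tubular neighborhood via the symplectic neighborhood theorem, and then conjugate by Liouville flows to absorb $X_i$. The only cosmetic difference is that the paper embeds all of $V$ at once and uses $\Core(X_i,\mu_i)\subset V$ to pull $X_i$ into the neighborhood by $Z_\mu^{-t_i}$, whereas you pass to a compact subdomain $V_j$ (via its completion $\wh V_j$) and phrase the Liouville conjugation as ``enlarging the disk''; the computations and formal bookkeeping are otherwise identical.
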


\begin{proof}
Since $V$ is an open symplectic manifold of dimension $2n-2<\dim W$, we can apply Theorem~\ref{thm:X-emb}~(\ref{thm:X-emb_non-para}) to $(\varphi|_V,\Phi_{s}|_{TV})$ and obtain an exact symplectic embedding $V=V\times\{0\}\to W$.
Moreover, it extends to an open neighborhood $U$ of $V$, and hence we get an exact symplectic embedding $h_{U}\colon(U,\mu|_{U})\to(W,\lambda)$ which is formally isotopic to $(\varphi|_{U},\Phi|_{TU})$.

We have ${\Core(X_{i},\mu_i)}\subset V\subset U$, and therefore there exists $t_{i}>0$ such that $Z^{-t_{i}}_{\mu}(X_{i})\subset U$, where $Z_{\mu}^{t}$ stands for the flow generated by the Liouville vector field of $\mu$.
Set $h_{i}:=h_{U}|_{Z^{-t_{i}}_{\mu}(X_{i})}$.
Using the flow $Z_\lambda^t$ of the Liouville field $Z_\lambda$ we construct an exact symplectic embedding $f_i\colon(X_{i},\mu_{i})\to(W,\lambda)$ by the formula
\[
f_{i}=Z^{t_{i}}_{\lambda}\circ h_{i}\circ Z^{-t_{i}}_{\mu}.
\]
Indeed,
	\begin{align*}
	f_{i}^{\ast}\lambda & =(Z^{-t_{i}}_{\mu})^{\ast}\circ h_{i}^{\ast}\circ(Z^{t_{i}}_{\lambda})^{\ast}(\lambda)=(Z^{-t_{i}}_{\mu})^{\ast}\circ h_{i}^{\ast}(e^{t_{i}}\lambda)= e^{t_{i}}(Z^{-t_{i}}_{\mu})^{\ast}(\mu_{i}+dH)\\
	& = e^{t_{i}}(e^{-t_{i}}\mu_{i}+d(H\circ Z^{-t_{i}}_{\mu}))=\mu_{i}+d(e^{t_{i}}(H\circ Z^{-t_{i}}_{\mu})).
	\end{align*}

By the construction, $f_{i} $ is formally isotopic to $(\varphi|_{X_{i}},\Phi|_{TX_{i}})$.
\end{proof}

The next lemma is a special case of Theorem~\ref{thm:W-emb}~(\ref{thm:W-emb_non-para}).

\begin{lemma}\label{lem:WL}
There exists an exact symplectic embedding $g_i\colon(W_i,\lambda_{i})\to(X,\mu)$ which is formally isotopic to $(\varphi^{-1}|_{W_{i}},\Phi^{-1}|_{TW_{i}})$.
\end{lemma}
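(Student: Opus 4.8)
The plan is to deduce the lemma directly from Theorem~\ref{thm:W-emb}~(\ref{thm:W-emb_non-para}), applied with the degenerate Weinstein subdomain $W_0=\varnothing$.

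First I would fix the exhausting sequence $W_1\subset W_2\subset\cdots$ of $W$ to consist of sublevel sets $W_i=\phi^{-1}((-\infty,c_i])$ for regular values $c_i$ of the Lyapunov function $\phi$ of the flexible Weinstein structure $\mathfrak W=(\om=d\lambda,\phi)$. Each such $W_i$ is a Weinstein domain obtained as the union of the first finitely many elementary cobordisms of $\mathfrak W$, so the flexibility of $\mathfrak W$ is inherited: in every elementary cobordism of $W_i$ the index-$n$ attaching spheres form a loose Legendrian link. Thus $(W_i,\lambda_i,\phi|_{W_i})$ is a flexible Weinstein domain, and with $W_0=\varnothing$ the Weinstein cobordism $W_i\setminus\Int W_0=W_i$ is flexible, as required by the hypotheses of Theorem~\ref{thm:W-emb}.

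Second, I would repackage the bundle data $(\varphi,\Phi_s)$ into a formal symplectic embedding of $(W_i,\om|_{W_i})$ into $(X,\eta=d\mu)$. Since $\varphi=\mathrm{id}_X$ and the $\Phi_s\colon TX\to TW$ are bundle isomorphisms with $\Phi_0=d\varphi$ and $\Phi_1=\Phi$ satisfying $\Phi^{\ast}\om=\eta$, the inverse family $\Phi_s^{-1}\colon TW\to TX$ is a homotopy of bundle isomorphisms from $d(\varphi^{-1})$ to $\Phi^{-1}$ with $(\Phi^{-1})^{\ast}\eta=\om$. Restricting to $W_i$, the pair $(\varphi^{-1}|_{W_i},\Phi_s^{-1}|_{TW_i})$ is a formal symplectic embedding of $(W_i,\om|_{W_i})$ into $(X,\eta)$, and it is vacuously a genuine exact symplectic embedding on $W_0=\varnothing$. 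Here the target $(X,\mu)$ is convex and its Liouville field is complete, hence forward complete, so the remaining hypotheses of Theorem~\ref{thm:W-emb} hold.

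Finally, Theorem~\ref{thm:W-emb}~(\ref{thm:W-emb_non-para}) then produces a genuine exact symplectic embedding $g_i\colon(W_i,\lambda_i)\to(X,\mu)$ which is formally isotopic rel.~$W_0=\varnothing$, i.e.~formally isotopic, to $(\varphi^{-1}|_{W_i},\Phi^{-1}|_{TW_i})$, as claimed. I do not expect a genuine obstacle in this argument: all the analytic weight sits inside Theorem~\ref{thm:W-emb}, and the only points to check with care are that the exhaustion may be chosen through flexible Weinstein subdomains and that the formal bundle data is inverted and restricted correctly.
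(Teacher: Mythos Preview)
Your proposal is correct and matches the paper's approach exactly: the paper simply states that Lemma~\ref{lem:WL} is a special case of Theorem~\ref{thm:W-emb}~(\ref{thm:W-emb_non-para}), and you have spelled out precisely that reduction (with $W_0=\varnothing$), including the verification that the $W_i$ may be taken to be flexible Weinstein subdomains and that the inverted bundle data furnishes the required formal symplectic embedding.
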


There exist subfamilies $\{X_{i_k}\}$ and $\{W_{j_k}\}$ such that $f_{i_k}(X_{i_k})\subset W_{j_k}$, $\varphi(X_{i_k})\subset W_{j_k}$, $g_{j_k}(W_{j_k})\subset X_{i_{k+1}}$, and $\varphi^{-1}(W_{j_k})\subset X_{i_{k+1}}$.
After renumbering, we have the following diagram:
\[
\xymatrix{
(X_{1}, \mu_{1}) ~\ar@{^{(}-{>}}[r]^{\iota_{X_{1}}} \ar[d]^{f_{1}} &~ (X_{2},\mu_{2}) ~\ar@{^{(}-{>}}[r]^{\iota_{X_{2}}} \ar[d]^{f_{2}} &~ (X_{3},\mu_{3}) ~\ar[d]^{f_{3}} \ar@{^{(}-{>}}[r]^{\iota_{X_{3}}} & ~~~\cdots \ar[d] \cdots\cdots ~\ar@{^{(}-{>}}[r] & (X,\mu)\\
~ (W_{1}, \lambda_{1}) ~\ar@{^{(}-{>}}[r]_{\iota_{W_{1}}} \ar[ru]_{g_{1}} &~ (W_{2}, \lambda_{2}) ~\ar@{^{(}-{>}}[r]_{\iota_{W_{2}}} \ar[ru]_{g_{2}} &~ (W_{3},\lambda_{3}) ~\ar@{^{(}-{>}}[r]_{\iota_{W_{3}}} \ar[ru]_{g_{3}} & ~~~\cdots\cdots\cdots ~\ar@{^{(}-{>}}[r] & (W,\lambda),
}
\]
where $\iota_{X_k}$ and $\iota_{W_k}$ are the inclusions.

\begin{lemma}\label{lem:ham}
The compositions $g_{k}\circ f_{k}\colon X_{k}\to X_{k+1}$ and $f_{k+1}\circ g_{k}\colon W_{k}\to W_{k+1}$ are Hamiltonian isotopic to the inclusions $\iota_{X_{k}}\colon X_{k}\to X_{k+1}$ and $\iota_{W_{k}}\colon W_{k}\to W_{k+1}$, respectively.
\end{lemma}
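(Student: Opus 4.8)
The plan is to treat the two compositions separately: Weinstein flexibility will handle the cobordism lying in $W$, and Gromov's $h$-principle the one lying in $X$. In both cases I would first check that the composition is formally isotopic to the relevant inclusion. Since $f_i$ is formally isotopic to $(\varphi|_{X_i},\Phi|_{TX_i})$ and $g_i$ to $(\varphi^{-1}|_{W_i},\Phi^{-1}|_{TW_i})$, concatenating these homotopies of injective bundle homomorphisms shows that $g_k\circ f_k$ is formally isotopic to $\varphi^{-1}\circ\varphi|_{X_k}=\iota_{X_k}$ (the formal data becoming homotopic to $\Phi^{-1}\circ\Phi=\mathrm{id}$), and likewise that $f_{k+1}\circ g_k$ is formally isotopic to $\iota_{W_k}$. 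So in each case one is left with two genuine exact symplectic embeddings in a common formal isotopy class, and the task is to promote the formal isotopy to a Hamiltonian one.

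For $f_{k+1}\circ g_k\colon W_k\to W_{k+1}$ this is the easy case. The domain $W_k$ inherits, as a Weinstein subdomain of the flexible Weinstein manifold $(W,\lambda,\phi)$, a flexible Weinstein structure, and the ambient $(W,\lambda)$ is convex with complete Liouville field. Taking $W_0=\emptyset$ in Theorem~\ref{thm:W-emb}, the cobordism $W_k\setminus\Int\emptyset=W_k$ is flexible, so part~(\ref{thm:W-emb_1-para}) of that theorem connects the two formally isotopic exact symplectic embeddings $f_{k+1}\circ g_k$ and $\iota_{W_k}$ by a Hamiltonian isotopy.

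For $g_k\circ f_k\colon X_k\to X_{k+1}$ the same device is unavailable, because we do not know that $X$, hence $X_k$, is Weinstein — that is exactly what is being proved. Instead I would use that the core has positive codimension. By construction of the stabilization, $\Core(X_k,\mu)\subset\Core(X,\mu)=\Core(V,\mu_V)\times\{0\}$ lies in the closed symplectic submanifold $V\times\{0\}\subset X$ of codimension $2$, and each of its points has an access to infinity in $X$ and in $V\times\{0\}$. The idea is to reduce the comparison of $g_k\circ f_k$ with $\iota_{X_k}$ to a comparison near this core, where positive codimension allows the $h$-principle to operate: pass to a neighborhood of $V\times\{0\}$ and then to $V\times\{0\}$ itself, where one is comparing two formally isotopic exact symplectic embeddings of the open $(2n-2)$-dimensional $V$ into $X$; apply Theorem~\ref{thm:X-emb} (parts~(\ref{thm:X-emb_1-para}) and~(\ref{thm:X-emb_rel_para}), rel $\Core(X_k,\mu)$, made Hamiltonian via the remark following Theorem~\ref{thm:X-emb} since $X$ is convex) to connect them; extend the resulting isotopy over a neighborhood of $V\times\{0\}$ by a parametric symplectic neighborhood argument for the symplectic normal bundle; and finally propagate it over all of $X_k$ by conjugating with the Liouville flow $Z_\mu^t$ exactly as $f_i$ was produced in Lemma~\ref{lem:LW}, choosing $\tau$ with $Z_\mu^{-\tau}(X_{k+1})$ inside that neighborhood and tracking the conformal factors as in the displayed computation there. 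Since $X_k$ is compact, the resulting isotopy through exact symplectic embeddings upgrades to an ambient Hamiltonian isotopy.

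I expect this convex side to be the main obstacle. The delicate part is the reduction from the full $2n$-dimensional domain $X_k$ to its positive-codimension core locus in $V\times\{0\}$: one has to arrange that the two embeddings can be compared rel the core (or, alternatively, control the supports of the $h$-principle isotopy for $V$), and then both the extension over the symplectic normal directions and the propagation by the merely conformally symplectic Liouville flow demand careful bookkeeping of the conformal factors and of the formal data, all of it complicated by the non-compactness of $V$. By contrast, once Theorem~\ref{thm:W-emb} is available the Weinstein side is immediate.
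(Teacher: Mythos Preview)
Your proposal is correct and follows essentially the same route as the paper: Theorem~\ref{thm:W-emb}~(\ref{thm:W-emb_1-para}) handles the Weinstein side, while on the convex side the paper also restricts to the codimension-$2$ slice $V_k:=V\cap X_k\subset V\times\{0\}$, applies Theorem~\ref{thm:X-emb}~(\ref{thm:X-emb_1-para}) there, extends the resulting isotopy to a neighborhood, and propagates it to all of $X_k$ via conjugation with the Liouville flow $Z_\mu^t$ exactly as in Lemma~\ref{lem:LW}. The paper does not invoke the relative version~(\ref{thm:X-emb_rel_para}) or any ``rel core'' condition, so the step you flag as delicate is in fact simpler than you anticipate once one works on the compact piece $V_k$ rather than on all of $V$.
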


\begin{proof}
By Lemmas~\ref{lem:LW} and \ref{lem:WL}, $g_{k}\circ f_{k}$ is formally isotopic to $\iota_{X_{k}}=\varphi^{-1}|_{W_{k}}\circ \varphi|_{X_{k}}$.
Applying Theorem~\ref{thm:X-emb}~(\ref{thm:X-emb_1-para}) to this formal isotopy restricted on $V_{k}=V\cap X_{k}$, we get a Hamiltonian isotopy $h^{s}_{k} \colon V_{k} \to X_{k+1}$, $s\in [0,1]$, such that $h^{0}_{k}=\iota_{X_{k}}|_{V_{k}}$ and $h^{1}_{k}=g_{k}\circ f_{k}|_{V_{k}}$.
Arguing as in the proof of Lemma~\ref{lem:LW}, we can define a Hamiltonian isotopy $\psi^{s}_{k}\colon X_{k}\to X_{k+1}$ between $\psi^{0}_{k}=\iota_{X_{k}}$ and $\psi^{1}_{k} =g_{k}\circ f_{k}$ by the formula
\[
\psi^{s}_{k}:=Z^{t_{k}}_{\mu}\circ\wt{h}^{s}_{k}\circ Z^{-t_{k}}_{\mu}.
\]
Here $\wt{h}^{s}_{k}$ is an extension of $h^{s}_{k}$ to an open neighborhood $U_{k}$ of $V_{k}$ as in the proof of Lemma~\ref{lem:LW} and $t_{k}$ is a sufficiently large number so that $Z^{-t_{k}}_{\mu}(X_{k})\subset U_{k}$.
	 
Similarly, we use Theorem~\ref{thm:W-emb}~(\ref{thm:W-emb_1-para}) to construct a Hamiltonian isotopy connecting $f_{k+1}\circ g_{k}$ and $\iota_{W_{k}}$.
\end{proof} 

\begin{proof}[Proof of Theorem~\ref{thm:stab-convex}]
We construct an exact symplectomorphism from $X$ to $W$ by induction over $k$.
First, set $F_{1}:=f_{1}\colon(X_{1},\mu_{1})\to(W_{1},\lambda_{1})$.
Lemma~\ref{lem:ham} constructs for any $k\geq1$ a Hamiltonian isotopy connecting the inclusion $\iota_{X_k}\colon X_k\to X_{k+1}$ with the composition $g_k\circ f_k$.
Hence cutting off this isotopy outside $X_k$ we get a Hamiltonian isotopy $G^{s}_{k}\colon(X,\mu)\to(X,\mu)$, $s\in[0,1]$, such that $G^{0}_{k}=\mathrm{id}_{X}$, $G^{1}_{k}|_{X_{k}}=g_{k}\circ f_{k}$, and $\mathrm{supp}(G^{s}_{k})\subset X_{k+1}$.
Similarly, Lemma~\ref{lem:ham} allows us to construct a Hamiltonian isotopy $H^{s}_{k}\colon(W,\lambda)\to(W,\lambda)$, $s\in[0,1]$, such that $H^{0}_{k}=\mathrm{id}_{W}$, $H^{1}_{k}|_{W_{k}}=f_{k+1}\circ g_{k}$, and $\mathrm{supp}(H^{s}_{k})\subset W_{k+1}$.
Set $G_{k}:=G^{1}_{k}$ and $H_{k}:=H^{1}_{k}$.
For $k\geq 2$, we define the exact symplectic embedding 
\[
F_{k}:=(H_{k-1}\circ\cdots\circ H_{1})^{-1}\circ f_{k}\circ (G_{k-1}\circ\cdots\circ G_{1})|_{X_{k}}\colon(X_{k},\mu_{k})\to(W_{k},\lambda_{k}).
\]
Since $\mathrm{supp}(G_{j})\subset X_{j+1}$ and $H^{-1}_{k-1}\circ f_{k}\circ G_{k-1}|_{X_{k-1}}=f_{k-1}$, the restriction $F_{k}|_{X_{k-1}}$ is equal to $F_{k-1}$, $k\geq 2$.
Hence, we can define an exact symplectic embedding $F\colon X\to W$ by setting $F:=F_{k}$ on $X_{k}$ for $k\geq 1$.
Let us show that $F$ is surjective.
Note that $W_{k-1}\subset F_{k}(X_{k})$ for $k\geq 2$.
Indeed, we have
	\begin{align*}
	H_{k-1}\circ H_{k-2}\circ\cdots\circ H_{1} (W_{k-1}) & =H_{k-1} (W_{k-1})\\
	& =f_{k}(g_{k-1}(W_{k-1}))\\
	& \subset f_{k}(X_{k})\\
	& =f_{k}(G_{k-1}\circ\cdots\circ G_{1}(X_{k})),
	\end{align*}
and thus $W_{k-1}\subset F_{k}(X_{k})$.
Hence,
\[
W=\bigcup_{k\geq1}W_{k}=\bigcup_{k\geq2}F_{k}(X_{k})=F\left(\bigcup_{k\geq2}X_{k}\right)=F(X).
\]
Therefore, $F\colon(X,\mu)\to(W,\lambda)$ is an exact symplectomorphism.
\end{proof}

\section{Proof of Theorem~\ref{thm:general}}\label{sec:proof2}

The proof follows the same scheme as the proof of Theorem~\ref{thm:stab-convex}.
As in that proof let $W$ be a flexible Weinstein manifold which is formally symplectomorphic to $X$, $\{(X_i,\mu_i)\mid i\geq1\}$ and $\{(W_i,\lambda_i)\mid i\geq1\}$ exhaustions by Liouville and Weinstein subdomains of $X$ and $W$, respectively, and $(\varphi,\Phi_t)$ the formal symplectic embeddings of $(X,\mu)$ into $(W,\lambda)$.
 
We begin by proving Lemmas~\ref{lem:LW}, \ref{lem:WL}, and \ref{lem:ham} in the current context.

{\bf Step 1.}
{\em Construction of exact symplectic embeddings $f_{i}\colon(X_{i},\mu_{i})\to(W,\lambda)$, $i\geq 1$, in the formal isotopy class of $(\varphi|_{X_{i}},\Phi|_{TX_{i}})$.}

Denote $S_i:=\bigcap\limits_{t}Z_{\mu}^{-t}(X_i)$.
The attractor $S_i$ is compact and we have $S_i\subset\Core(X,\mu)$, and hence there exists an integer $N$ such that $S_i\subset \bigcup\limits_{j\leq N}C_j$.
 
Since the first subset $C_1$ is compact, there exists a finite open cover $\{U_{p_1},\ldots,U_{p_{k_1}}\}$ of $C_1$ such that each intersection $U_{p_j}\cap C_1$ admits a symplectic extension $\Sigma_{p_j}$ of positive codimension.
Choose also a cover $\{U'_{p_1},\ldots,U'_{p_{k_1}}\}$ such that $ U'_{p_j}\Subset U_{p_j}, j=1,\dots, k_1$.
Applying Theorem~\ref{thm:X-emb}~(\ref{thm:X-emb_non-para}) to $(\varphi|_{\Sigma_{p_1}},\Phi_t|_{\Sigma_{p_1}})$, we obtain an exact symplectic embedding $f^{(1)}_{1}\colon\Sigma_{p_1}\to W$ and moreover we can modify $\Phi_t$ so that $\Phi_1|_{T\Sigma_{p_1}}=df^{(1)}_{1}$.
The symplectic neighborhood theorem then allows us to extend $f^{(1)}_{1}$ to a neighborhood $\Omega_1\supset\Sigma_{p_1}$ in $U_{p_1}$.
We will keep the notation $f^{(1)}_{1}$ for this extension.
Denote $A_1:= \ol{C_1\cap U'_{p_1}}\cap U_{p_2}$.
Note that every point of $A_1$ has an access to infinity in $\Sigma_{p_2}$.
Hence, applying Theorem~\ref{thm:X-emb}~(\ref{thm:X-emb_rel}) we find a symplectic embedding $f^{(1)}_{2}\colon \Sigma_{p_2}\to W$ which coincides with $f^{(1)}_{1}$ on $\Op A_1\subset\Sigma_{p_2}$.
We further modify $\Phi_t$ so that $ \Phi_1|_{T\Sigma_{p_2}}=df^{(1)}_{2}$ and then use the symplectic neighborhood theorem to extend the exact symplectic embedding $f^{(1)}_{2}$ to a neighborhood $\Omega_2\supset\Sigma_{p_2}$ in $U_{p_2}$ so that the extended embedding $f^{(1)}_{2}$ coincides with $f^{(1)}_{1}$ on $\Op A_1$ in $U_{p_2}$.
Continuing this process we construct an exact symplectic embedding $f^{(1)}\colon\Op C_1\to W$.
Choosing a sufficiently small neighborhood $U_1\supset C_1$, where $f^{(1)}$ is defined, we find a finite open cover $\{U^{2}_{p_1},\ldots,U^{2}_{p_{k_2}}\}$ of the compact set $C_2\setminus U_1 $ such that each intersection $U^{2}_{p_j}\cap C_2$ admits a symplectic extension $\Sigma^{2}_{p_j}$ of positive codimension.
Repeating the above process inductively over elements of the cover $U^{2}_{p_j}$ of $C_2\setminus U_1$ and then continuing a similar process for $C_3,\dots,C_N$ we construct an exact symplectic embedding $h_i\colon\Op(C_1\cup\cdots\cup C_N)\to W$.
By our construction $Z_{\mu}^{-T}(X_i)$ for a sufficiently large $T$ is contained in a neighborhood of $\bigcup\limits_{j\leq N}C_j$ where $h_i$ is defined.
Hence the formula $f_i:=Z_{\lambda}^{T}\circ h_{i}\circ Z_{\mu}^{-T}$ defines the required exact symplectic embedding $f_i\colon X_i\to W$.
 
{\bf Step 2.}
{\em Construction of exact symplectic embeddings $g_i\colon(W_i,\lambda_i)\to(X,\mu)$, $i\geq1$, in the formal isotopy class of $(\varphi^{-1}|_{W_{i}},\Phi^{-1}|_{TW_{i}})$.}

This is a corollary of Theorem~\ref{thm:W-emb}~(\ref{thm:W-emb_non-para}), as in the case of Theorem~\ref{thm:stab-convex}.

{\bf Step 3.}
{\em Proof that the compositions $g_{k}\circ f_{k}\colon X_{k}\to X_{k+1}$ and $f_{k+1}\circ g_{k}\colon W_{k}\to W_{k+1}$ are Hamiltonian isotopic to the inclusions $\iota_{X_{k}}\colon X_{k}\to X_{k+1}$ and $\iota_{W_{k}}\colon W_{k}\to W_{k+1}$, respectively} (after readjusting the indices as in the proof of Theorem~\ref{thm:stab-convex}).

Steps 1 and 2 imply that $g_{k}\circ f_{k}$ is formally isotopic to $\iota_{X_{k}}=\varphi^{-1}|_{W_{k}}\circ\varphi|_{X_{k}}$.
To construct a genuine Hamiltonian isotopy connecting $g_{k}\circ f_{k}$ and $\iota_{X_{k}}$ we repeat the proof in Step 1, but using instead Theorems~\ref{thm:X-emb}~(\ref{thm:X-emb_1-para}) and \ref{thm:X-emb}~(\ref{thm:X-emb_rel_para}).
The existence of a Hamiltonian isotopy connecting $f_{k+1}\circ g_{k}$ and $\iota_{W_{k}}$ is even more straightforward using Theorem~\ref{thm:W-emb}~(\ref{thm:W-emb_1-para}).

{\bf Step 4}.
With the analogs of Lemmas~\ref{lem:LW}, \ref{lem:WL}, and \ref{lem:ham} established, we construct the required exact symplectomorphism $f\colon X\to W$ using the telescope construction exactly as in the proof of Theorem~\ref{thm:stab-convex}.~\hfill$\Box$


\begin{thebibliography}{99}
\bibitem[CE12]{CE12}
K.~Cieliebak and Y.~Eliashberg.
\emph{From Stein to Weinstein and Back: Symplectic Geometry of Affine Complex Manifolds.}
Colloquium Publications {\bf59}, American Mathematical Society (2012).

\bibitem[Co14]{Co14}
S.~Courte.
\emph{Contact manifolds with symplectomorphic symplectizations.}
Geom.~Topol. {\bf18} (2014), no.~1, 1--15.

\bibitem[El90]{El90}
Y.~Eliashberg.
\emph{Topological characterization of Stein manifolds of dimension $>2$.}
Internat.~J.~Math. {\bf1} (1990), no.~1, 29--46.

\bibitem[EG91]{EG91}
Y.~Eliashberg and M.~Gromov.
\emph{Convex symplectic manifolds.}
Proc.~Symp.~Pure.~Math. {\bf52} (1991), Part 2, 135--162, American Mathematical Society.

\bibitem[ELM20]{ELM20}
Y.~Eliashberg, O.~Lazarev, and E.~Murphy.
In preparation.

\bibitem[EM02]{EM02}
Y.~Eliashberg and N.~Mishachev.
\emph{Introduction to the $h$-Principle.}
Graduate Studies in Mathematics {\bf48}, American Mathematical Society (2002).

\bibitem[EM13]{EM13}
Y.~Eliashberg and E.~Murphy.
\emph{Lagrangian caps.}
Geom.~Funct.~Anal. {\bf23} (2013), no.~5, 1483--1514.

\bibitem[Ge94]{Ge94}
H.~Geiges.
\emph{Symplectic manifolds with disconnected boundary of contact type.}
Internat.~Math.~Res.~Notices (1994), no.~1, 23--30.

\bibitem[Gr86]{Gr86}
M.~Gromov.
\emph{Partial Differential Relations.}
Ergebnisse der Mathematik und ihrer Grenzgebiete (3) {\bf9}. Springer Verlag, Berlin (1986).

\bibitem[MNW13]{MNW13}
P.~Massot, K.~Niederkr\"{u}ger, and C.~Wendl.
\emph{Weak and strong fillability of higher dimensional contact manifolds.}
Invent.~Math. {\bf192} (2013), no.~2, 287--373.

\bibitem[Ma61]{Ma61}
B.~Mazur.
\emph{Stable equivalence of differentiable manifolds.}
Bull.~Amer.~Math.~Soc. {\bf67} (1961), 377--384.

\bibitem[Mc91]{Mc91}
D.~McDuff.
\emph{Symplectic manifolds with contact type boundaries.}
Invent.~Math. {\bf103} (1991), no.~3, 651--671.

\bibitem[Mi95]{Mi95}
Y.~Mitsumatsu.
\emph{Anosov flows and non-Stein symplectic manifolds.}
Ann.~Inst.~Fourier (Grenoble) {\bf45} (1995), no.~5, 1407--1421.

\bibitem[Mu12]{Mu12}
E.~Murphy.
\emph{Loose Legendrian Embeddings in High Dimensional Contact Manifolds.}
Preprint, arXiv:1201.2245.

\bibitem[MS18]{MS18}
E.~Murphy and K.~Siegel.
\emph{Subflexible symplectic manifolds.}
Geom.~Topol. {\bf22} (2018), no.~4, 2367--2401.
\end{thebibliography}
\end{document}